\DeclareSymbolFontAlphabet{\mathbb}{AMSb} %to ensure that the meaning of \mathbb does not change
\DeclareSymbolFontAlphabet{\mathbbl}{bbold}
\newcommand{\prism}{{\mathlarger{\mathbbl{\Delta}}}}
\newcommand{\rmd}{\mathrm d}
\newcommand{\cs}[1]{(\!({#1})\!)}
\newcommand{\dcs}[1]{\left(\!\left({#1}\right)\!\right)}
\newcommand{\<}{\left\langle}
\newcommand{\>}{\right\rangle}
\DeclareMathOperator{\gr}{gr}
\newcommand{\tops}[2]{\texorpdfstring{#1}{#2}}
\newcommand{\lf}{\left\lfloor}
\newcommand{\rf}{\right\rfloor}
\newcommand{\lc}{\left\lceil}
\newcommand{\rc}{\right\rceil}
\newcommand{\psr}[1]{[\![#1]\!]}
\newcommand{\C}{\mathbf C}
\newcommand{\F}{\mathbf F}
\newcommand{\N}{\mathbf N}
\newcommand{\Q}{\mathbf Q}
\newcommand{\R}{\mathbf R}
\let\sec=\S
\renewcommand{\S}{\mathbf S}
\newcommand{\T}{\mathbf T}
\newcommand{\W}{\mathbf W}
\newcommand{\Z}{\mathbf Z}
\DeclareMathOperator{\coker}{coker}
\newcommand{\tnsr}{\otimes}
\newcommand{\morph}{\mathop{\longrightarrow}\limits}
\renewcommand{\lim}{\mathop{\operatorname*{lim}\limits_{\longleftarrow}}\limits}
\newcommand{\iso}{\overset\sim\longrightarrow}
\newcommand{\isom}{\cong}
\newcommand{\defeq}{\mathrel{:=}}
\DeclareMathOperator{\RO}{RO}
\DeclareMathOperator{\RU}{RU}
\renewcommand{\O}{\mathcal O}
\DeclareMathOperator{\dlog}{dlog}
\newcommand{\xra}{\xrightarrow}
\newcommand{\Zpcyc}{\Z_p^{\mathrm{cyc}}}
\newcommand{\conv}{\Rightarrow}
\newcommand{\bigoplushat}{\mathop{\widehat\bigoplus}\limits}
\newcommand{\K}{{\mathrm{K}}}
\newcommand{\THH}{\mathrm{THH}}
\newcommand{\TR}{\mathrm{TR}}
\newcommand{\TC}{\mathrm{TC}}
\newcommand{\TF}{\mathrm{TF}}
\newcommand{\bigTR}{\mathbf{TR}}
\newcommand{\Nyg}{\mathcal N}
\DeclareMathOperator{\length}{length}
\newcommand{\rog}{\bigstar}
\newcommand{\trog}{\blacklozenge}
\newcommand{\can}{\mathrm{can}}
\DeclareMathOperator{\fib}{fib}
\newcommand{\hsl}[1]{[\![#1]\!]_\lambda}
\newcommand{\hyper}{\~}
\newcommand{\mup}[1]{\ar@/_1em/[u]_-{#1}}
\newcommand{\mdown}[1]{\ar@/_1em/[d]_-{#1}}
\newcommand{\Ainf}{{\mathbf{A}_{\mathrm{inf}}}}
\renewcommand{\~}{\widetilde}
\renewcommand{\th}{\text{th}}
\newtheorem{theorem}{Theorem}[section]
\newtheorem*{theorem*}{Theorem}
\newtheorem{lemma}[theorem]{Lemma}
\newtheorem{corollary}[theorem]{Corollary}
\newtheorem{proposition}[theorem]{Proposition}
\theoremstyle{definition}
\newtheorem{definition}[theorem]{Definition}
\newtheorem{remark}[theorem]{Remark}
\newtheorem{example}[theorem]{Example}
\newtheorem{warning}[theorem]{Warning}
\begin{document}

\title{Floor, ceiling, slopes, and $K$-theory}

\author[Y.~J.~F.~Sulyma]{Yuri~J.~F. Sulyma}
\email{yuri.sulyma@protonmail.com}
% \thanks{Sulyma was supported in part by NSF grant DMS-1564289}

\begin{abstract}
We calculate $\K_*(k[x]/x^e;\Z_p)$ by evaluating the syntomic cohomology $\Z_p(i)(k[x]/x^e)$ introduced by Bhatt-Morrow-Scholze and Bhatt-Scholze. This recovers calculations of Hesselholt-Madsen and Speirs, and generalizes an example of Mathew treating the case $e=2$ and $p>2$. Our main innovation is systematic use of the floor and ceiling functions, which clarifies matters substantially even for $e=2$. We furthermore observe a persistent phenomenon of \emph{slopes}. As an application, we answer some questions of Hesselholt.
\end{abstract}

\maketitle

\newcommand{\df}{W(k)\<\frac{x^d}{\lf d/e\rf!}\>}
\newcommand{\dc}{W(k)\<\frac{x^d}{\Gamma{\lc d/e\rc}}\dlog x\>}

\tableofcontents

% !TEX root=./slopes.tex

\section{Introduction}

Let $k$ be a perfect $\F_p$-algebra, and consider the truncated polynomial algebra $k[x]/x^e$. In this note we evaluate the \emph{syntomic cohomology} $\Z_p(i)(k[x]/x^e)$ introduced by Bhatt-Morrow-Scholze \cite{BMS2} and Bhatt-Scholze \cite{Prismatic}. This generalizes an example of Mathew \cite[\sec10]{MathewBMS} which treats the case $e=2$ and $p>2$ (and works essentially verbatim for $p\nmid e$).

Given an augmented $k$-algebra $R$, define the \emph{reduced} syntomic cohomology $\~{\Z_p(i)}(R)$ by the fiber sequence
\[ \~{\Z_p(i)}(R) \to \Z_p(i)(R) \to \Z_p(i)(k). \]
In our case we have $\~{\Z_p(i)}(R)=\Z_p(i)(R)$ for $i\ne0$, but the reduced formulation is more convenient in some places, and important when considering more general values of $k$ (Remark \ref{rmk:perfd}).

\begin{theorem}
\label{thm:main}
For $i>0$, we have
\[ \~{\Z_p(i)}(k[x]/x^e) = \frac{\W_{ei}(k)}{V_e \W_i(k)}[-1]. \]
\end{theorem}
This allows us to compute the (reduced) $p$-adic $K$-theory $\~\K_*(k[x]/x^e;\Z_p)$, recovering calculations of Hesselholt-Madsen \cite{HMCyclicPolytopes} and Speirs \cite{SpeirsTrunc}.

\begin{corollary}
\label{cor:main}
The reduced $p$-adic $K$-groups of $k[x]/x^e$ are concentrated in odd degrees, given by
\[ \~\K_{2i-1}(k[x]/x^e;\Z_p) = \frac{\W_{ei}(k)}{V_e \W_i(k)}. \]
\end{corollary}

As an application of our methods, we answer some questions of Hesselholt: the functoriality of $\K_*(k[x]/x^e;\Z_p)$ as $e$ varies; a slope pattern emerging from this functoriality; and the multiplicative structure of $\K_*(k[x]/x^e;\F_p)$.

\begin{remark}
\label{rmk:perfd}
Theorem \ref{thm:main} and Corollary \ref{cor:main} continue to hold for $k$ an arbitrary perfectoid ring. There are several ways to see this:

\begin{itemize}
\item Repeat the calculations of \sec\ref{sec:calculation} using $q$-divided powers and the $q$-de Rham complex \cite[\sec16]{Prismatic}, \cite{ScholzeQ}. We encourage the reader to carry out this calculation; there is an amusing twist that appears in the symbol $\{d,e\}$. To be precise, we should assume that $k$ is a $\Zpcyc=\Z_p[\zeta_{p^\infty}]^\wedge_p$-algebra in order to justify the use of the $q$-de Rham complex. However, we expect that this restriction can be removed, at least for $p$ odd, by tensoring with $\Zpcyc$ and using \cite[Proposition 4.8.8]{APC}.

\item Alternatively, this follows (without restrictions on $k$) from the original approach to computing $\~\K(k[x]/x^e;\Z_p)$ via ``poly-representation graded $\TR$''; see \sec\ref{sub:tr}, in particular Remark \ref{rmk:poly-perfd}. The point is that poly-graded $\TR$ behaves (in the range of poly-degrees relevant to $K$-theory calculations) the same way for arbitrary perfectoid rings as for perfect $\F_p$-algebras\footnote{This may seem surprising, since it is not true for $\Z$-graded $\TR$ above degree 0. Unlike the integer degrees, the limits in the poly-degrees stabilize at a finite stage, so we do not require spherical completeness hypotheses.}. This provides a meta-theorem showing that a wide class of $K$-theory calculations generalize verbatim from perfect $\F_p$-algebras to arbitrary perfectoid rings. It would be interesting to work out a precise general statement here, and also to give a proof directly using prismatic cohomology (not mentioning $\TR$).

\item Finally, Riggenbach has given an independent proof of this (and much more) in \cite[Theorem 1.1, Corollary 5.4]{RiggenbachTruncated}. His strategy is in some ways a mix of the two approaches above, also mixing in the quasisyntomic site.
\end{itemize}
\end{remark}

\begin{remark}
Theorem \ref{thm:main} can be written more suggestively as a fiber sequence
\[ \~{\Z_p(i)}(k[x]/x^e) \to \W_i(k) \xra{V_e} \W_{ei}(k). \]
This perspective is closer to the approach taken in \sec\ref{sub:tr} than the one in \sec\ref{sec:calculation}. It would be interesting to generalize this sequence to $\~{\Z_p(i)}$ of more general algebras.
\end{remark}

% \Huge
% \[ \operatorname{Ind}^G_H(X) = G/H_+\land X \]

\begin{remark}
The truncated polynomial algebra $k[x]/x^e$ can also be written as $k[y^{1/e}]/y$. The latter form is more convenient for taking the limit over $e$ to get the ring $k[y^{1/p^\infty}]/y$, which plays an important role in prismatic theory \cite[\sec12.2]{Prismatic}. Beware that when carrying out the $q$-version of our calculation (Remark \ref{rmk:perfd}), this coordinate change is subject to the \emph{twisted} chain rule
\[ y^n \dlog y = [e]_{q^n} x^{en} \dlog x, \]
so one must be careful with the ``$\dlog$'' symbol in $q$-land.
\end{remark}

\subsection{Outline}
In \sec\ref{sec:prelims}, we review Mathew's strategy for computing $\~{\Z_p(i)}(k[x]/x^e)$ and collect the various identities we will need. We separate these out in order to emphasize the brevity of the actual calculation, which we carry out in \sec\ref{sec:calculation}. In \sec\ref{sec:discussion}, we introduce a visual language for thinking about the result, emphasizing the role of \emph{slopes}. We use this to discuss two further perspectives on the calculation:
\begin{itemize}
\item In \sec\ref{sub:witt}, we rewrite the answer obtained in \sec\ref{sec:calculation} into the formulation appearing in Theorem \ref{thm:main}.

\item In \sec\ref{sub:tr}, we review the original approach to computing $\K(k[x]/x^e;\Z_p)$, recasting it in terms of ``slope poly-representations'' and the techniques of \cite[\sec4]{SulSliceTHH}.
\end{itemize}
In \sec\ref{sec:applications}, we apply our methods to answer some questions of Hesselholt. We make some closing remarks in \sec\ref{sec:epilogue}.

Interactive versions of the figures in this paper, including source code, are available at
\begin{center}
  \url{https://ysulyma.github.io/papers/fcsk/}
\end{center}

\subsection{Acknowledgements}
We are grateful to Ben Antieau and Noah Riggenbach for helpful discussions, and to the anonymous referee for catching some errors and suggesting several improvements.

% !TEX root=./slopes.tex
\section{Preliminaries}
\label{sec:prelims}

This section sets up the context and tools needed for the main calculation. In \sec\ref{sub:strategy}, we review Mathew's strategy from \cite{MathewBMS} for computing $\~\K_*(k[x]/x^e;\Z_p)$. In \sec\ref{sub:notation}, we collect notation and identities that will be used throughout the paper.

\subsection{Strategy}
\label{sub:strategy}
Define the \emph{reduced} $K$-theory $\~\K(k[x]/x^e;\Z_p)$ to be the fiber of $\K(k[x]/x^e;\Z_p)\to\K(k;\Z_p)$. Reduced versions of other invariants are defined similarly. The Dundas-Goodwillie-McCarthy theorem \cite{DGM} says that the cyclotomic trace identifies the reduced $K$-theory with the reduced topological cyclic homology,
\[ \~\K(k[x]/x^e;\Z_p) \iso \~\TC(k[x]/x^e;\Z_p), \]
so we will compute the latter. (The $p$-completion of $\TC$ is redundant in our case, but necessary for the perfectoid generalization.)

By Bhatt-Morrow-Scholze \cite[Theorem 1.12(5)]{BMS2}, $\~\TC(R;\Z_p)$ has a ``motivic'' filtration with graded pieces given by shifts of (reduced) \emph{syntomic cohomology} $\~{\Z_p(i)}(R)$:
\[
  \gr^i\~\TC(R;\Z_p) = \~{\Z_p(i)}(R)[2i].
\]
This gives a spectral sequence
\[
  E^1_{i,j} = H^{i-j}\~{\Z_p(i)}(R) \conv \~\TC_{i+j}(R;\Z_p).
\]
In our case, $\~{\Z_p(i)}(k[x]/x^e)$ will turn out to be concentrated in degree 1, so the spectral sequence collapses to
\[
  \~\TC_{2i-1}(k[x]/x^e;\Z_p) = H^1\~{\Z_p(i)}(k[x]/x^e).
\]

By definition, syntomic cohomology fits into a fiber sequence
\[
  \Z_p(i)(R) \to \Nyg^{\ge i}\prism_R\{i\} \xra{\can-\varphi\{i\}} \prism_R\{i\}
\]
where $\prism_R\{i\}$ is the Breuil-Kisin twisted \emph{absolute prismatic cohomology} of $R$, and $\Nyg^{\ge i}$ denotes the \emph{Nygaard filtration}. Note that in \cite{BMS2}, this was defined using Nygaard-completed prismatic cohomology $\hat\prism_R$. However, the two definitions agree by \cite[Lemma 7.22, Proposition 8.20]{BMS2} or \cite[Corollary 5.31]{AMMN}.

When $R$ is an $\F_p$-algebra, $\prism_R$ is identified with the \emph{derived crystalline cohomology} $LW\Omega_R$ \cite[Theorem 9.1]{MathewBMS}. Furthermore, over a perfectoid base (in this case $\F_p$) we may trivialize and thus ignore the Breuil-Kisin twists. Thus, the expression for syntomic cohomology becomes \cite[Construction 10.2]{MathewBMS}
\[ \Z_p(i)(R) \to \Nyg^{\ge i}LW\Omega_R \xra{\can-\varphi/p^i} LW\Omega_R. \]

Suppose $B$ is a $p$-torsionfree, $p$-complete, and quasisyntomic $\delta$-ring with $B/p=R$; in our case, $B=W(k)[x]/x^e$ with $\delta(x)=0$. As explained in \cite[Example 8.11]{MathewBMS}, in this situation there is a canonical identification
\[ LW\Omega_R = L\Omega_B \]
which takes the Nygaard filtration to the tensor product of the Hodge filtration on $L\Omega_B$ and the $p$-adic filtration on $\Z_p$. That is, we have
\[ \{\Nyg^{\ge *}LW\Omega_R\} = \{L\Omega^{\ge*}_B\} \tnsr \{p^{\max(*,0)}\Z_p\} \]
in the $p$-complete filtered derived category.

Finally, let $A\to A/I=B$ be a surjection of $\delta$-rings with $A$ also $p$-torsionfree, $p$-complete, and quasisyntomic; in our case, $A=W(k)[x]^\wedge_p$ and $I=(x^e)$. We must also assume that $A/p$ is Cartier smooth \cite[Definition 4.18]{MathewBMS} and the Frobenius on $A/p$ is flat; these are satisfied in our case, where $A/p=k[x]$. Let $D$ denote the divided power envelope of $I$ in $A$ \cite[Construction 7.14]{MathewBMS}. By \cite[Theorem 7.16]{MathewBMS}, there is a natural isomorphism between $L\Omega_B$ and the $p$-completion of the divided power de Rham complex $D\otimes_A\Omega^\bullet_A$. This isomorphism identifies the Hodge filtration on $L\Omega_B$ with the tensor product of the divided power filtration on $D$ and the naive filtration on $\Omega^\bullet_A$.

\subsection{Notation and identities}
\label{sub:notation}
Let $\cs x=\max(x,0)$. Note the identity
\begin{equation}
  \label{eq:cs-neg}
  \cs x - x = \cs{-x}.
\end{equation}

We write $\bigoplushat$ for the $p$-completion of a direct sum. For example, there are strict inclusions
\[ \Z_p[t] \lneq \Z_p[t]^\wedge_p \lneq \Z_p\psr t: \]
we have $\sum p^i t^i\in \Z_p[t]^\wedge_p \setminus \Z_p[t]$, and $\sum t^i\in\Z_p\psr t\setminus \Z_p[t]^\wedge_p$. In the literature, one sometimes finds the notation $R{\<t\>}$ for $R[t]^\wedge_p$ (or for $R{\left[\frac{t^n}{n!}\right]}$); we will use $R{\<t\>}$ to denote the free $R$-module of rank one on a generator $t$.

Let $\lf-\rf,\lc-\rc\colon\R\to\Z$ (floor and ceiling) denote the right and left adjoints respectively of the inclusion $\Z\to\R$. We will need slightly extended versions of their universal properties that describe their interaction with $<$ as well as with $\le$:
\begin{align}
  \label{eq:ceil-ext}
  x \le n < y &\iff \lc x\rc \le n < \lc y\rc\\
  \label{eq:floor-ext}
  x < n \le y &\iff \lf x\rf < n \le \lf y\rf
\end{align}
For $n\in\N_{>0}$ and $m,x\in\R$, we have the division identities
\begin{align}
  \label{eq:div-floor}
  \lf\frac{\lf x/m\rf}n\rf &= \lf\frac x{mn}\rf\\
  \label{eq:div-ceil}
  \lc\frac{\lc x/m\rc}n\rc &= \lc\frac x{mn}\rc
\end{align}

The combinatorial interpretation of these functions is:
\begin{align}
  \label{eq:comb-floor}
  \lf n/k\rf &= \#\{1,\dotsc,n\}\cap k\N\\
  \label{eq:comb-ceil}
  \lc n/k\rc &= \#\{0,\dotsc,n-1\}\cap k\N\\
  \label{eq:comb-logp}
  \dcs{\lf \log_p(m/j)\rf + 1} &= \#[1,m]\cap\{p^\bullet j\}
\end{align}

In particular, from \eqref{eq:comb-floor} and \eqref{eq:comb-ceil} we see that
\begin{equation}
  \label{eq:floor-ceil}
  \lf\frac{n-1}k\rf = \lc n/k\rc - 1.
\end{equation}
Significantly, this shows that the left-hand side depends only on $\nu=n/k\in\Q$, which is not obvious \emph{a priori}.

We will also need the following lemmas.
\begin{lemma}
\label{lem:leg-scale}
For $\nu\in\Q_{>0}$, we have
\begin{align*}
  v_p(\lf p\nu\rf!) &= \lf\nu\rf + v_p(\lf\nu\rf!)\\
  v_p\Gamma{\lc p\nu\rc} &= (\lc\nu\rc - 1) + v_p\Gamma{\lc\nu\rc}
\end{align*}
\end{lemma}
\begin{proof}
Use Legendre's formula
\[ v_p(n!) = \sum_{r=1}^\infty \lf\frac n{p^r}\rf \]
along with the identities \eqref{eq:div-floor} and \eqref{eq:floor-ceil}.
\end{proof}

\begin{lemma}
\label{lem:logp-floor}
For $\nu\in\R_{>0}$ and $j\in\N_{>0}$, we have
\[\lf\log_p\frac\nu j\rf = \lf\log_p\frac{\lf\nu\rf}j\rf. \]
\end{lemma}
\begin{proof}
The number $s=\lf\log_p\frac\nu m\rf$ is characterized by
\[
  p^s \le \frac\nu m < p^{s+1}.
\]
Multiplying by $m$, applying \eqref{eq:floor-ext}, and dividing by $m$ shows that this is equivalent to
\[
  p^s \le \frac{\lf\nu\rf}m < p^{s+1}.\qedhere
\]
\end{proof}

% !TEX root=./slopes.tex

\section{Calculation}
\label{sec:calculation}
Let $k$ be a perfect $\F_p$-algebra, let $A=W(k)[x]^\wedge_p$, viewed as a $\delta$-ring where $\delta(x)=0$, let $B=A/x^e$, and let $R=k[x]/x^e$. As explained in \sec\ref{sub:strategy}, $\prism_R=LW\Omega_R$ is given by the $p$-completed divided power de Rham complex of $A$ along $I=(x^e)$, namely
\[
  W(k)\left[x, \frac{x^{ej}}{j!}\right]^\wedge_p
  \morph^{\rmd}
  W(k)\left[x,\frac{x^{ej}}{j!}\right]^\wedge_p\,\rmd x.
\]
Since we are ultimately interested in computing reduced $K$-theory, we will discard the constants in degree 0 to get the reduced crystalline cohomology $\~{LW\Omega}_R$.

We will rewrite the above complex in three steps.
\begin{enumerate}
\item Reindex by polynomial degree $d$: if $d=ej+r$ with $0\le r<e$, then $j=\lf d/e\rf$. The complex becomes
\[
  \bigoplushat_{d\ge1} \df
  \morph^{\rmd}
  \bigoplushat_{d\ge1} W(k)\<\frac{x^d}{\lf d/e\rf!}\,\rmd x\>.
\]

\item ``Diagonalize'' the differential operator: $\rmd(x^d) = dx^d\dlog x$. The complex splits up as $\smash{\bigoplushat_{d\ge1}}$ of
\[
  \df
  \morph^{\rmd}
  W(k)\<\frac{x^d}{\lf \frac{d-1}e\rf!}\dlog x\>.
\]

\item Using \eqref{eq:floor-ceil}, rewrite $\lf\frac{d-1}e\rf!=(\lc d/e\rf-1)!=\Gamma{\lc d/e\rc}$. The complex becomes $\smash{\bigoplushat_{d\ge1}}$ of
\[
    \df
    \morph^{\rmd}
    \dc.
\]
\end{enumerate}

We next consider the Hodge filtration $\smash{\~{L\Omega}}^{\ge i}_{B/W(k)}$, which we recall is given by the pd-filtration and the naive filtration. Writing $|\cdot|$ for the Hodge degree, we have $|x|=0$ and $|\rmd x|=1$, and thus $|{\dlog x}|=1$, and thus $\left|\frac{x^d}{\Gamma{\lc d/e\rc}}\dlog x\right|=\lc d/e\rc$. It follows that $\smash{\~{L\Omega}}^{\ge i}_{B/W(k)}$ is given by
\[
  \bigoplushat_{\lf d/e\rf\ge i} \df
  \morph
  \bigoplushat_{\lc d/e\rc\ge i} \dc.
\]
Being able to index terms by their Hodge degree is why we emphasize the Gamma and ceiling functions rather than writing everything in terms of factorial and floor.

The Nygaard filtration $\Nyg^{\ge i}\~{LW\Omega}_R$, given by the Hodge filtration and the $p$-adic filtration, is $\smash{\bigoplushat_{d\ge1}}$ of
\[
  p^{\cs{i-\lf d/e\rf}} \df
  \morph^{\rmd}
  p^{\cs{i - \lc d/e \rc}} \dc.
\]

Now we evaluate the cohomology of these complexes. Define $\{d,e\}$ so that
\[
  \rmd\left(\frac{x^d}{\lf d/e\rf!}\right) = \{d,e\}\frac{x^d}{\Gamma{\lc d/e\rc}}\dlog x;
\]
explicitly, we have
\[
  \{d,e\} =
  d\frac{\Gamma{\lc d/e\rc}}{\lf d/e\rf!}
  =
  \begin{cases}
    d & e\nmid d\\
    e & e\mid d.
  \end{cases}
\]
Also let
\begin{align*}
  \epsilon(i, d/e)
  &= \cs{i-\lf d/e\rf}-\cs{i-\lc d/e\rc}\\
  &= \begin{cases}1 & \lf d/e \rf < \lc d/e\rc \le i\\0 & \text{else}\end{cases}
\end{align*}
We see that the cohomology of $\~{LW\Omega}_R$ and $\Nyg^{\ge i}\~{LW\Omega}_R$ is concentrated in degree 1, given by
\begin{align*}
  H^1(\~{LW\Omega}_R)_d &=
  W(k)/\{d,e\}\<\dfrac {x^d}{\Gamma{\lc d/e\rc}}\dlog x\>\\
  H^1(\Nyg^{\ge i} \~{LW\Omega}_R)_d
  &=
  W(k)/p^{\epsilon(i, d/e)}\{d,e\}
  \<p^{\cs{i-\lc d/e\rc}} \frac{x^d}{\Gamma{\lc d/e\rc}}\dlog x\>.
\end{align*}

Next we need to understand the canonical and divided Frobenius maps. We claim that
\begin{align*}
  \can\Big(H^1(\Nyg^{\ge i}\~{LW\Omega}_R)_d\Big) &= p^{\cs{i-\lc d/e\rc}}H^1(\~{LW\Omega}_R)_{d}\\
  \varphi/p^i\Big(H^1(\Nyg^{\ge i}\~{LW\Omega}_R)_d\Big) &= p^{\cs{\lc d/e\rc-i}}H^1(\~{LW\Omega}_R)_{pd}.
\end{align*}
The first of these is evident. For the second, we have by definition that $\varphi/p^i$ takes
\begin{align*}
  p^{\cs{i-\lc d/e\rc}} \frac{x^d}{\Gamma{\lc d/e\rc}}\dlog x
  \mapsto
  \frac{p^{\cs{i-\lc d/e\rc}}}{p^{i-1}} \frac{x^{pd}}{\Gamma{\lc d/e\rc}}\dlog x
\end{align*}
Using Lemma \ref{lem:leg-scale} and the identity \eqref{eq:cs-neg}, we have
\begin{align*}
  \frac{p^{\cs{i-\lc d/e\rc}}}{p^{i-1}} \frac{\Gamma{\lc pd/e\rc}}{\Gamma{\lc d/e\rc}}
  &= \frac{p^{\cs{i-\lc d/e\rc}}}{p^{i-\lc d/e\rc}}\\
  &= p^{\cs{\lc d/e\rc -i}}
\end{align*}
as claimed.

The actions of $\can$ and $\varphi/p^i$ are depicted in Figure \ref{fig:bands}.  We see (using $p$-completeness) that $\varphi/p^i-\can$ is surjective, and its kernel is indexed by the set $I_p$ of positive integers coprime to $p$.

\begin{proposition}
\label{prop:h1-zpi}
The cohomology of $\~{\Z_p(i)}(R)$ is
\[
  H^1\~{\Z_p(i)}(R)
  \isom
  \bigoplus_{j\in I_p}
  W(k)/\{p^{\dcs{\lf\log_p\frac{ei}j\rf+1}}j, e\}
\]
\end{proposition}
\begin{proof}
Let $s$ be minimal such that $\varphi/p^i$ with domain $H^1(\Nyg^{\ge i}\~{LW\Omega}_R)_{p^{s+1}j}$ is \emph{not} an isomorphism; these entries are boxed in Figure \ref{fig:bands}.
Then we have
\[
  H^1\~{\Z_p(i)}(R)_j \cong H^1(\Nyg^{\ge i}\~{LW\Omega}_R)_{p^{s+1}j}.
\]
Indeed, we can hit all further images of $\varphi/p^i$ with $\can$, and we can hit all prior images of $\can$ with $\varphi/p^i$.

The condition on $s$ is
\[
  \lc\frac{p^sj}e\rc \le i < \lc\frac{p^{s+1}j}e\rc
\]
Applying \eqref{eq:ceil-ext} and rearranging gives
\[
  s \le \log_p\frac{ei}j < s+1
\]
which shows that $s=\lf\log_p\frac{ei}j\rf$. To finish, note that $\epsilon(i,p^{s+1}j/e)=0$ by definition of $s$.
\end{proof}

\begin{warning}
  The map $\can-\varphi/p^i$ is \emph{not} levelwise surjective as a map of cochain complexes; it manages to be surjective on $H^1$ because $H^1(LW\Omega_{k[x]/x^e})_j=0$ for all $j\in I_p$. Consequently, we cannot model $\Z_p(i)(k[x]/x^e)$ as the \emph{kernel} of $\can-\varphi/p^i$.
\end{warning}

\begin{figure}
\includegraphics[width=\textwidth]{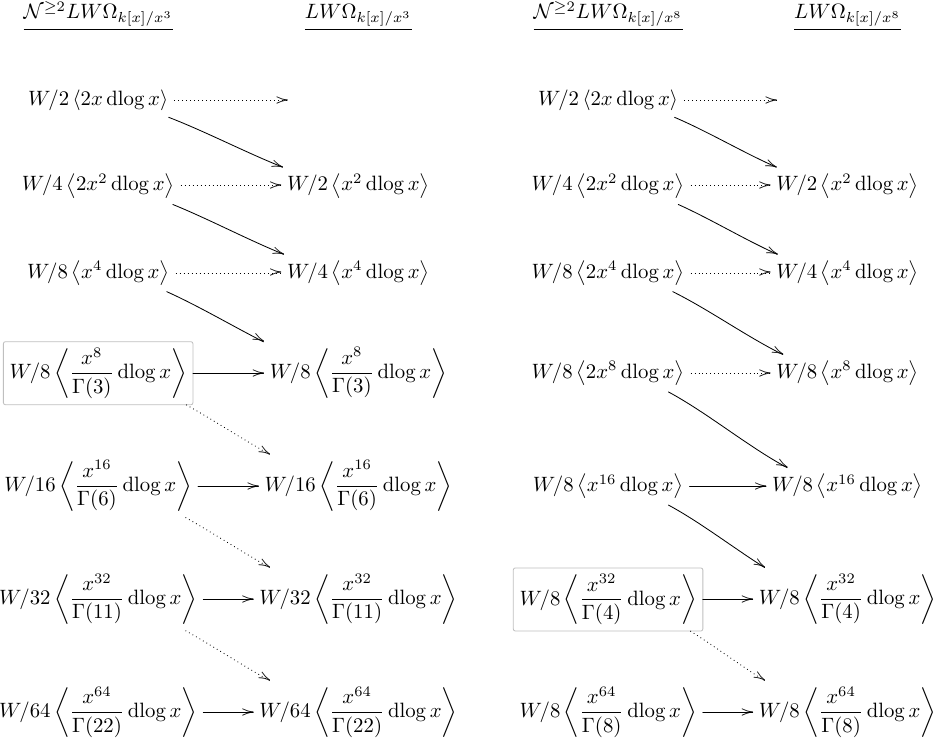}
\caption{Action of $\can$ and $\varphi/p^i$ ($p=2,\, j=1$). Solid lines are isomorphisms, dotted lines are not. The boxes indicate the first entry where $\varphi/p^i$ is \emph{not} an isomorphism.}
\label{fig:bands}
\end{figure}

From Dundas-Goodwillie-McCarthy  and the BMS spectral sequence we have
\[  \~\K_{2i-1}(k[x]/x^e;\Z_p) = \~\TC_{2i-1}(k[x]/x^e;\Z_p) = H^1\~{\Z_p(i)}(k[x]/x^e) \]
so this gives the desired $K$-theory of truncated polynomial algebras. The answer is given in a slightly different form in \cite{HMCyclicPolytopes} and \cite{SpeirsTrunc}, but one checks easily that they are equivalent.

\begin{example}
From Figure \ref{fig:bands}, we see that the $j=1$ summand of $\K_3(\F_2[x]/x^8;\Z_2)$ is generated by
\[ 0 + 0 + 4x^4\dlog x + 2x^8\dlog x + x^{16}\dlog x + \frac{x^{32}}{\Gamma(4)}\dlog x + \frac{x^{64}}{2\Gamma(4)}\dlog x + \dotsb \]
\end{example}

% !TEX root=./slopes.tex

% Discussion
\section{Discussion}
\label{sec:discussion}

To understand the preceding calculation, it is helpful to visualize the situation by plotting $e$ on the $x$-axis and $d$ on the $y$-axis, so that the quantity $d/e$ is a \emph{slope}; see Figure \ref{fig:slopes}. We introduce some terminology to facilitate our discussion.

\begin{figure}
\includegraphics[width=0.7\textwidth]{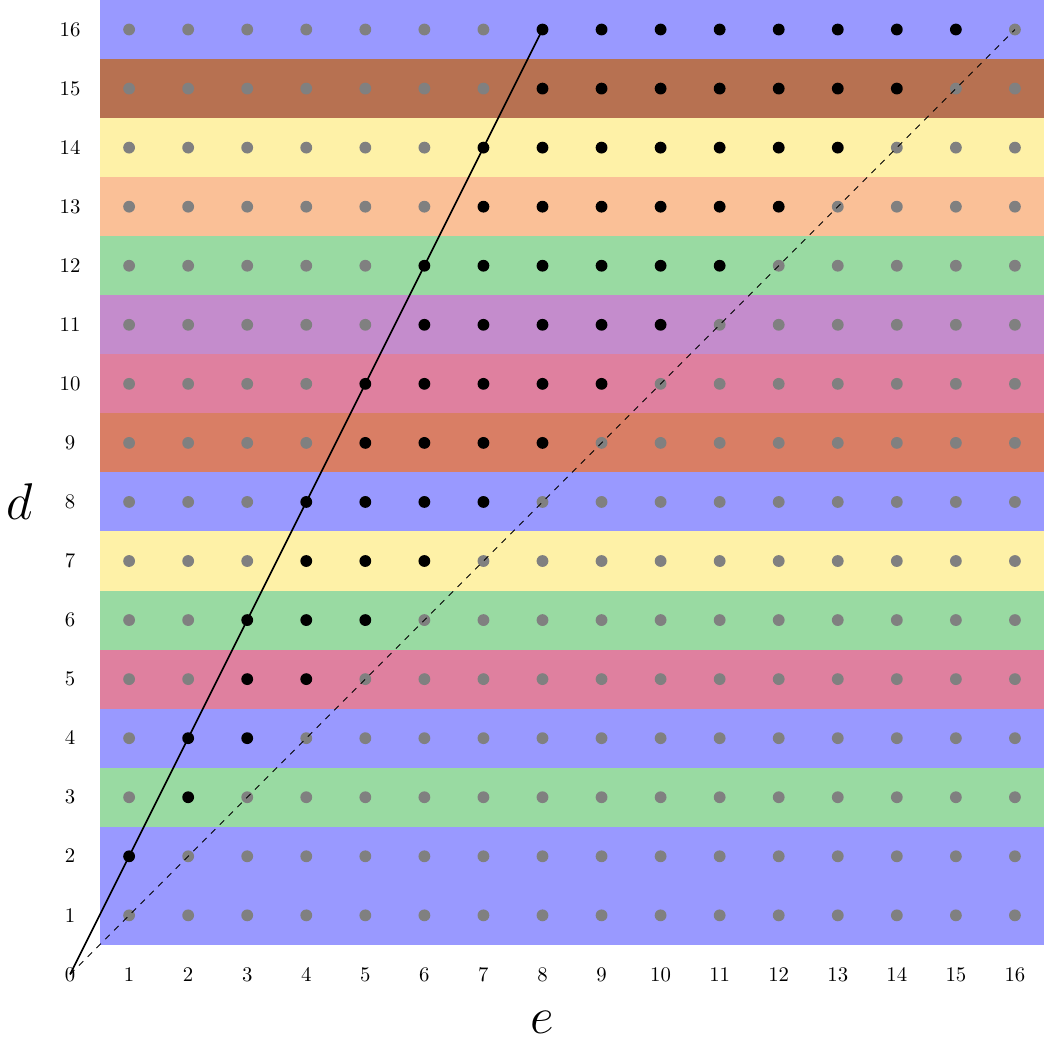}
\caption{Regions bounded by integer slopes}
\label{fig:slopes}
\end{figure}

\begin{definition}
For $j\in I_p$, we call the subset
\[ \{(x,p^a j) \mid x,a\in\N\} \]
the \emph{$j^\text{th}$ $p$-band}. For $i\ge1$, we call the subset
\[ \{(x, ix) \mid x\in\N\} \]
the \emph{$i^\text{th}$ slope ray}.
\end{definition}

With this terminology,
\begin{itemize}
% \item $\lf d/e\rf$ is the last slope ray crossing or below $(e,d)$

\item $\lc d/e\rc$ is the first slope ray crossing or above $(e,d)$

\item $i-\lc d/e\rc$ counts the number of slope rays one crosses traveling vertically from $(e,d)$ to $(e,ei)$, counting the starting point but not the ending point

\item $\lf\log_p\frac{ei}j\rf$ indicates, in the $e^\th$ column, the last entry of the $j^\th$ $p$-band which is on or below the $i^\th$ slope ray
\end{itemize}

If we place $W(k)/\{d,e\}$ in the plane at $(e,d)$, then $\~\K_{2i-1}(k[x]/x^e;\Z_p)$ is given by the sum of the entries in the $e^\th$ column which are the first in their $p$-band to lie strictly above the $i^\th$ slope ray. We next review some alternative interpretations in our language.

% Big Witt vectors
\subsection{Big Witt vectors}
\label{sub:witt}

The result obtained in the last section (Proposition \ref{prop:h1-zpi}) is not yet in the form stated in Theorem \ref{thm:main}, namely
\[ H^1\~{\Z_p(i)}(k[x]/x^e) = \frac{\W_{ei}(k)}{V_e\W_i(k)}. \]
Here $\W_m$ denotes the big Witt vectors with respect to the truncation set of positive integers $\le m$, and $V_e$ the $e^\th$ Verschiebung operator. Recall that a \emph{truncation set} is a subset $J$ of the positive integers closed under divisibility, and that the big Witt vectors $\W_J(k)$ are defined for any such $J$, with $\W_J(k)\isom k^J$ as sets. Moreover, there is a $p$-typical decomposition
\begin{align*}
  \W_J(k)
    &= \prod_{j\in I_p} \W_{J\cap\{j,pj,\dots\}}(k)\\
    &= \prod_{j\in I_p} W_{\#J\cap\{p^\bullet j\}}(k)
\end{align*}
taking $(a_1,a_2,\dotsc)$ to $\{(a_j,a_{pj},a_{p^2j},\dots)\}_j$; this is valid more generally for any $\Z_{(p)}$-algebra \cite[Proposition 1.10]{HesselholtBigW}.

In particular, we can visualize $\W_m(k)$ as the first $m$ entries in any column, and then read off the $p$-typical decomposition from the $p$-bands. Now do this with $\W_{ei}(k)$ in the $e^\th$ column and $\W_i(k)$ in the first, so that both go up to the $i^\th$ slope ray. Then traveling along the slope rays from the first column to the $e^\th$ indicates the image of $V_e\W_i(k)$. Using \eqref{eq:comb-logp}, we see that $\W_{ei}(k)/V_e\W_i(k)$ agrees with $\~\K_{2i-1}(k[x]/x^e;\Z_p)$.

\begin{remark}
Our identification of $\~\K_{2i-1}(k[x]/x^e;\Z_p)$ with $\W_{ei}(k)/V_e\W_i(k)$ comes \emph{a posteriori} by identifying both with the product of $p$-typical Witt vectors. This is also the case in \cite{SpeirsTrunc} and \cite{HMCyclicPolytopes}. It would be nice to have a direct argument.
\end{remark}

% Poly-representations
\subsection{Poly-representations and \tops{$\TR$}{TR}}
\label{sub:tr}

The original approach to calculating $\~\K_*(k[x]/x^e;\Z_p)$ proceeds via analysis of the cyclic bar construction of the pointed monoid $\{0,1,x,\dotsc,x^{e-1}\}$, along with the $\RO(\T)$-graded homotopy of $\THH$. We give a brief account of this from our perspective. Since this subsection uses ideas quite different from the rest of the paper, and is mainly of interest for understanding the relation to \cite{SulSliceTHH}, most readers are advised to skip it. We will use the notation of \cite{SulSliceTHH} (assuming familiarity with that paper), and be fairly terse.

Let $e=p^ve'$ with $p\nmid e'$. Then there is an exact sequence \cite[Proposition 8]{HesselholtHandbook}
\begin{equation}
\label{eq:tr}
0 \to \prod_{j\in e'I_p}\lim_R \TR^{n+1-v}_{2i-[d]_\lambda}(k) \to \prod_{j\in I_p}\lim_R \TR^{n+1}_{2i-[d]_\lambda}(k) \to \TC_{2i-1}(k[x]/x^e) \to 0
\end{equation}
where $d=\lc p^nj/e\rc$, and we have used \eqref{eq:floor-ceil} to rewrite the equation from the form it appears in \emph{loc.\ cit.} We caution the reader that we use $\TR$ to denote \emph{$p$-typical} $\TR$, sometimes denoted $\TR(-;p)$ in the literature, hence the product over $j\in I_p$. We will use $\bigTR$ to denote ``big'' $\TR$, so that $\TR^{n+1}=\bigTR^{p^n}$ and $\bigTR=\prod_{j\in I_p}\TR$.

The limits require some explanation. Recall that in the $\RO(\T)$-graded context, the Restriction maps go
\[ \TR^{n+1}_\alpha \to \TR^n_{\alpha'} \]
so it is not quite correct to speak of ``$\RO(\T)$-graded $\TR$''.

\begin{definition}
Let
\[
  \RO(\T)_p^\sharp \defeq \lim\left(\dotsb\xra'\RO(C_{p^2})\xra{'}\RO(C_p)\xra{'}\RO(e)\right),
\]
called the ring of ($p$-typical, virtual) \emph{poly-representations} of $\T$. There is an evident notion of integral poly-representation, which we will denote by $\RO(\T)^\sharp$. We say that a poly-representation $\hyper\alpha\in\RO(\T)^\sharp$ is an \emph{actual poly-representation} if each $\hyper\alpha(C_n)$ is an actual representation, and \emph{fixed-point free} if $\hyper\alpha(e)=0$. Any poly-representation is of the form $\hyper\alpha$ or $\hyper\alpha+1$ for a \emph{complex} poly-representation $\hyper\alpha\in\RU(\T)^\sharp$, so we will mostly work with the latter. 

For a $p$-typical poly-representation $\hyper\alpha\in\RO(\T)^\sharp_p$, or integral poly-representation $\hyper\beta\in\RO(\T)^\sharp$, we set
\begin{align*}
  \TR_{\hyper\alpha} &\defeq \lim_R \TR^{n+1}_{\hyper\alpha(C_{p^n})}\\
  \bigTR_{\hyper\beta} &\defeq \lim_R \bigTR^n_{\hyper\beta(C_n)}
\end{align*}
We will use $\TR_\trog$ (resp.\ $\bigTR_\trog$) to denote generic $\RO(\T)^\sharp_p$ (resp.\ $\RO(\T)^\sharp$) grading, in analogy to how $\TF_\rog$ is used to denote generic $\RO(\T)$-grading.
\end{definition}

\begin{remark}
Given a poly-representation $\~\alpha\in\RO(\T)^\sharp_p$ (resp.\ $\~\beta\in\RO(\T)^\sharp$), the representation spheres $S^{\~\alpha(C_{p^n})}$ (resp.\ $S^{\~\beta(C_n)}$) assemble into a $p$-typical (resp.\ integral) \emph{polygonic spectrum} in the sense of Krause-McCandless-Nikolaus \cite{KMNPolygonic}.
\end{remark}

A $p$-typical complex poly-representation $\hyper\alpha\in\RU(\T)^\sharp_p$ corresponds uniquely to a sequence $(\~d_0,\~d_1,\~d_2,\dotsc)$, where $\~d_n=\dim_\C\hyper\alpha(C_{p^n})$. We then have
\[
  d_k(\hyper\alpha(C_{p^n})) = \~d_{n-k}(\hyper\alpha).
\]
In this encoding, $\hyper\alpha$ is an actual poly-representation if and only if $\~d_n(\hyper\alpha)\le \~d_{n+1}(\hyper\alpha)$ for all $n$.

For $i>0$, let $\hyper\lambda_i$ denote the $p$-typical poly-representation $C_{p^n}\mapsto \lambda_{n-i}$ (with the understanding that $\lambda_{n-i}=0$ if $n<i$); colloquially, $\hyper\lambda_i$ ``starts at $C_{p^i}$''. These have dimension-sequence
\[
  \~d_k(\hyper\lambda_n) =
  \begin{cases}
    0 & k < n\\
    1 & k \ge n.
  \end{cases}
\]
We write $a_{\hyper\lambda_i}$ and $u_{\hyper\lambda_i}$ for the $\RU(\T)^\sharp_p$-graded classes specializing to $a_{\lambda_{n-i}}$ and $u_{\lambda_{n-i}}$ respectively. We have the following relations in $\TR^{n+1}_\rog(k)$:
\begin{align*}
  p^ia_{\lambda_{n-i}} &= 0\\
  a_{\lambda_j} u_{\lambda_i} &= p^{j-i} a_{\lambda_i} u_{\lambda_j} \quad j>i.
\end{align*}
The first is the fact that $a_\rog$ kills transfers, the second is the gold relation \cite[Lemma 4.9]{SulSliceTHH}. These translate to the following relations in $\TR_\trog(k)$:
\begin{align*}
  p^ia_{\hyper\lambda_i} &= 0\\
  a_{\hyper\lambda_j} u_{\hyper\lambda_i} &= p^{i-j}a_{\hyper\lambda_i} u_{\hyper\lambda_j} \quad j<i.
\end{align*}

We use $\hyper\lambda_0$ to denote the trivial complex poly-representation, which has $\~d_k(\hyper\lambda_0)=1$ for all $k$. Then any $p$-typical poly-representation $\hyper\alpha\in\RU(\T)^\sharp_p$ has an ``irreducible decomposition''
\[
  \hyper\alpha
  =
  \~d_0(\hyper\alpha)\hyper\lambda_0 +
  \sum_{i>0} (\~d_i(\hyper\alpha)-\~d_{i-1}(\hyper\alpha))\hyper\lambda_i,
\]
where the sum on the right may be infinite. 

The following is a corollary of \cite[Lemma 4.12]{SulSliceTHH}, see also \cite[Proposition 9.1]{HMFinite} and \cite[Theorem 8.3]{ROS1TR}.

\begin{proposition}
\label{prop:tr-hyper}
The portion of $\TR_\trog(k)$ of the form $\trog=*-\hyper\alpha$, with $*\in\Z$ and $\hyper\alpha$ an actual poly-representation, is
\[ \TR_\trog(k) = W(k)[a_{\hyper\lambda_i}, u_{\hyper\lambda_i} \mid i>0] \]
except that we are allowed to take the product of infinitely many $a_{\hyper\lambda_i}$s (with each \emph{individual} $a_{\hyper\lambda_i}$ appearing only finitely many times, of course).

Explicitly, let $\hyper\alpha=\sum_{i>0} k_i\hyper\lambda_i$ be an actual, fixed-point free complex poly-representation. Write
\[
  \hyper\alpha[s,t) = k_s\hyper\lambda_s + \dotsb + k_{t-1}\hyper\lambda_{t-1}.
\]
Then
\[
  \TR_{2i-\hyper\alpha}(k)
  =
  \begin{cases}
    W_{s+1}(k)\<u_{\hyper\alpha[1,s+1)} u_{\hyper\lambda_{s+1}}^{i-\~d_s(\hyper\alpha)} a_{\hyper\lambda_{s+1}}^{\~d_{s+1}(\hyper\alpha)-i} a_{\hyper\alpha[s+2,\infty)}\> & \~d_s(\hyper\alpha)\le i < \~d_{s+1}(\hyper\alpha)\\
    W(k)\<u_{\hyper\alpha}\> & \~d_s(\hyper\alpha)=i\text{ for }s\gg0\\
    0 & \~d_s(\hyper\alpha) < i\text{ for all }s\\
    0 & \~d_0(\hyper\alpha) > i
  \end{cases}
\]
\end{proposition}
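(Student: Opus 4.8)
The plan is to reduce Proposition~\ref{prop:tr-hyper} to its $\RO(\T)$-graded counterpart \cite[Lemma 4.12]{SulSliceTHH} by passing to the limit over Restriction. First I would recall, from that lemma, the description of $\TR^{n+1}_{2i-\alpha}(k)$ for $\alpha$ an actual representation of $C_{p^n}$: it is a cyclic $W_{s+1}(k)$- or $W(k)$-module generated by an explicit monomial in the $a_{\lambda_j}$ and $u_{\lambda_j}$, with the cases governed by where $i$ sits relative to the partial dimensions $d_k(\alpha)$. The point of the hyper-representation formalism is exactly that, under Restriction, $\lambda_j$ on $C_{p^{n+1}}$ maps to $\lambda_{j-1}$ on $C_{p^n}$ (and $\lambda_0\mapsto 0$), so a fixed hyper-representation $\hyper\alpha=\sum k_i\hyper\lambda_i$ presents, at level $n$, the representation $\hyper\alpha(C_{p^n})$ whose partial dimensions are $d_k(\hyper\alpha(C_{p^n}))=\~d_{n-k}(\hyper\alpha)$. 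Thus at each finite level the cited lemma applies verbatim, and the task is to compute the (derived) limit of these cyclic modules along $R$.

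The key steps, in order: (1) Fix $\hyper\alpha$ actual and fixed-point free, write $\~d_n=\~d_n(\hyper\alpha)$, and choose $s$ with $\~d_s\le i<\~d_{s+1}$ (the generic case); note that for $n$ large the truncation level appearing at stage $n$ stabilizes, because the relevant ``jump'' in the dimension sequence that cuts off the $W_{\bullet}(k)$ happens at a bounded distance from the top. (2) Identify the transition maps: Restriction sends the generator of $\TR^{n+2}_{2i-\hyper\alpha(C_{p^{n+1}})}(k)$ to the generator of $\TR^{n+1}_{2i-\hyper\alpha(C_{p^n})}(k)$ up to a unit once $n$ is large, because the monomial in the $u$'s and $a$'s simply reindexes $\lambda_j\rightsquigarrow\lambda_{j-1}$ and no extra powers of $p$ are introduced (here one uses that $R$ is compatible with the $a$- and $u$-classes, i.e. $R(a_{\lambda_j})=a_{\lambda_{j-1}}$, $R(u_{\lambda_j})=u_{\lambda_{j-1}}$, and the gold relation to move things into the standard form). (3) Conclude that the pro-system is eventually constant with value the asserted cyclic $W_{s+1}(k)$-module, so $\lim{}^0$ gives that module and $\lim{}^1$ vanishes; assemble the generator as $u_{\hyper\alpha[1,s+1)}u_{\hyper\lambda_{s+1}}^{\,i-\~d_s}a_{\hyper\lambda_{s+1}}^{\,\~d_{s+1}-i}a_{\hyper\alpha[s+2,\infty)}$, noting that the last factor is a genuinely infinite product of $a$'s, which is legitimate precisely because in the limit each finite stage only sees finitely many of them. (4) Handle the two degenerate cases: if $\~d_s=i$ for all $s\gg0$, the truncation level $s+1\to\infty$ and the modules $W_{s+1}(k)\<u\>$ have limit $W(k)\<u_{\hyper\alpha}\>$ with vanishing $\lim{}^1$ (Mittag--Leffler, surjective transition maps); if $\~d_s<i$ for all $s$, each finite stage is already $0$.

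The main obstacle I expect is the bookkeeping in step~(2)--(3): verifying that the Restriction maps on the \emph{explicit generators} are unit multiples (not merely surjections) once $n$ is large, and correctly tracking the powers of $p$ via the gold relation $a_{\hyper\lambda_j}u_{\hyper\lambda_i}=p^{i-j}a_{\hyper\lambda_i}u_{\hyper\lambda_j}$ for $j<i$ as one rewrites the stage-$n$ monomial into the normal form dictated by $s$. One must also be careful that ``the truncation level stabilizes'' really holds: the subtlety is that as $n$ grows, $\hyper\alpha(C_{p^n})$ acquires more $\lambda_j$-summands at the \emph{bottom} (small $j$), which is where the $a$-product lives, while the $W_{s+1}(k)$-cyclicity is controlled by behavior near the \emph{top} — so the two ends decouple and the limit is clean, but this needs to be said carefully. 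Everything else — the presentation as a polynomial algebra $W(k)[a_{\hyper\lambda_i},u_{\hyper\lambda_i}\mid i>0]$ with the infinite-product caveat, and the irreducible decomposition of $\hyper\alpha$ — is then just a repackaging of \cite[Lemma 4.12]{SulSliceTHH} under the (fully faithful) Restriction-limit functor, and I would state it as such rather than reprove it.
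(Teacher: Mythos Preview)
Your approach is exactly what the paper intends: the paper gives no proof at all, merely stating that the proposition is ``a corollary of \cite[Lemma 4.12]{SulSliceTHH}'' (with pointers also to \cite[Proposition 9.1]{HMFinite} and \cite[Theorem 8.3]{ROS1TR}), and your plan is precisely the natural way to unpack that claim---apply the finite-level result to each $\hyper\alpha(C_{p^n})$ and take the limit along $R$. The bookkeeping you flag in steps (2)--(3) is real but routine, and your case analysis matches the statement; there is nothing to correct.
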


We now return to our case of interest. For a slope $\nu\in\Q_{>0}$, we see using \eqref{eq:div-ceil} and \eqref{eq:comb-ceil} that
\[
  C_n\mapsto[\lc n\nu\rc]_\lambda
\]
gives a poly-representation, which we denote by $\hsl\nu\in\RU(\T)^\sharp$. With this notation, we can rewrite \eqref{eq:tr} as
\[
0 \to \prod_{j\in I_p}\TR_{2i-\hsl j}(k) \xra{V_e} \prod_{j\in I_p}\TR_{2i-\hsl{j/e}}(k) \to \TC_{2i-1}(k[x]/x^e) \to 0
\]
or more succinctly
\[
0 \to \bigTR_{2i-\hsl1}(k) \xra{V_e} \bigTR_{2i-\hsl{1/e}}(k) \to \TC_{2i-1}(k[x]/x^e) \to 0.
\]
\begin{corollary}
\label{cor:slope-perfd}
For $i\ge1$ and a slope $\nu\in\Q_{>0}$, we have
\begin{align*}
  \TR_{2i-\hsl\nu}(k) &= W_{\dcs{\lf\log_p(i/\nu)\rf+1}}(k),\\
  \bigTR_{2i-\hsl\nu}(k) &= \W_{\dcs{\lf i/\nu\rf}}(k).
  \intertext{In particular,}
  \bigTR_{2i-\hsl{1/e}}(k) &= \W_{ei}(k).
\end{align*}
\end{corollary}
\begin{proof}
By Proposition \ref{prop:tr-hyper}\footnote{The poly-representation $\hsl\nu$ is not fixed-point free. But this assumption is used only to name the generator; it does not affect the criterion for the $W(k)$-module structure.}, we have $\TR_{2i-\hsl\nu}(k)=W_{s+1}(k)$, where $s$ is such that
\[ \~d_s(\hsl\nu)\le i<\~d_{s+1}(\hsl\nu), \]
and $\TR_{2i-\hsl\nu}(k)=0$ if no such $s\ge0$ exists. As a $p$-typical poly-representation, the dimension-sequence of $\hsl\nu$ is $\~d_s(\hsl\nu)=\lc p^s\nu\rc$, so this becomes
\[ \lc p^s\nu\rc \le i < \lc p^{s+1}\nu \rc \]
which by \eqref{eq:ceil-ext} is equivalent to
\[ p^s\nu \le i < p^{s+1}\nu. \]
Rearranging gives
\[ s\le \log_p(i/\nu) < s+1 \]
which says that $s=\lf\log_p(i/\nu)\rf$. This shows that $\TR_{2i-\hsl\nu}(k)$ is as claimed. To deduce $\bigTR_{2i-\hsl\nu}(k)$ from this, we must show that
\[ \lf\log_p\frac i{j\nu}\rf = \lf\log_p\frac {\lf i/\nu\rf}j\rf \]
(c.f.\ the discussion in \sec\ref{sub:witt}), which is a special case of Lemma \ref{lem:logp-floor}.
\end{proof}

\begin{remark}
  \label{rmk:poly-perfd}
  For posterity, we record the analogue of Proposition \ref{prop:tr-hyper} for perfectoid rings; again, this is merely a restatement of \cite[Lemma 4.12]{SulSliceTHH}. Let $S$ be a perfectoid ring, and let $A=\Ainf(S)$. Recall the maps $\theta_i,\,\~\theta_i$ and the elements $\xi_i,\~\xi_i$ from \cite[Lemma 3.12]{BMS1}, as well as the element $\mu$ from \cite[Proposition 3.17]{BMS1}.
  
  We first explain how to translate \cite{SulSliceTHH} to the present context. In that paper we were concerned with $\TF(S;\Z_p)$, and thus viewed $W_n(S)$ as an $A$-algebra via the map
  \[ \phi^{-1}\~\theta_n\colon A/\phi^{-1}(\~\xi_n) \iso W_n(S). \]
  When considering $\TR(S;\Z_p)$, we should instead view $W_n(S)$ as an $A$-algebra via the map
  \[ \theta_n\colon A/\xi_n \iso W_n(S).\]
  Thus we have
  \[ \TR^n_*(S;\Z_p) = A/\xi_n[\sigma_n] \]
  for some choice of generator $\sigma_n\in\TR^n_2(S;\Z_p)$.

  The following Lewis diagram shows how to translate the $F$ and $V$ maps between the two normalizations:
  \[\xymatrix{
    W_3(S) \mdown F & A/\xi\phi(\xi)\phi^2(\xi) \mdown1 \ar[r]^-{\phi^{-2}} & A/\xi\phi^{-1}\phi^{-2}(\xi) \mdown\phi\\
    W_2(S) \mdown F \mup V & A/\xi\phi(\xi) \mdown1 \mup{\phi^2(\xi)} \ar[r]^-{\phi^{-1}} & A/\xi\phi^{-1}(\xi) \mdown\phi \mup{\xi\phi^{-1}}\\
    W_1(S) \mup V & A/\xi \mup{\phi(\xi)} \ar@{=}[r] & A/\xi \mup{\xi\phi^{-1}}
  }\]
  The $R$ map depends on which degree $\TR^\bullet_{2i}(S;\Z_p)$ we are in. The conversion is as follows:
  \[\xymatrix{
    W_3(S){\<\sigma_3^i\>} \ar[d]_-R & A/\xi\phi(\xi)\phi^2(\xi) \ar[d]_-{\phi^{-1}(\xi)^i\phi^{-1}} \ar[r]^-{\phi^{-2}} & A/\xi\phi^{-1}\phi^{-2}(\xi) \ar[d]^-{\phi^{-2}(\xi)^i}\\
    W_2(S){\<\sigma_2^i\>} \ar[d]_-R & A/\xi\phi(\xi) \ar[d]_-{\phi^{-1}(\xi)^i\phi^{-1}} \ar[r]^-{\phi^{-1}} & A/\xi\phi^{-1}(\xi) \ar[d]^-{\phi^{-1}(\xi)^i} \\
    W_1(S){\<\sigma_1^i\>} & A/\xi \ar@{=}[r] & A/\xi
  }\]
  See also \cite[Lemma 3.4]{BMS1}.
  
  Now back to poly-representations. We restrict to degrees $\trog=*-\~\alpha$, with $*\in\Z$ and $\~\alpha$ an actual, fixed-point free poly-representation, and require $*=0$ if $\~\alpha=0$. Translating \cite[Lemma 4.12]{SulSliceTHH}, the corresponding portion of $\TR_\trog(S;\Z_p)$ is
  \[ \TR_\trog(S;\Z_p) = W(S)[a_{\~\lambda_i}, u_{\~\lambda_i}] \]
  subject to the ``Euler'' and ``$q$-gold'' relations \cite[Lemma 4.9]{SulSliceTHH}
  \begin{align*}
    \xi_i a_{\hyper\lambda_{i}} &= 0\\
    a_{\hyper\lambda_{j}} u_{\hyper\lambda_{i}} &= \phi^{-j}(\xi_{i-j}) a_{\hyper\lambda_{i}} u_{\hyper\lambda_{j}} \quad j<i.
  \end{align*}
  In particular, Corollary \ref{cor:slope-perfd} holds for perfectoid rings (provided we $p$-complete $\TR$ and $\bigTR$).

  We can say more in the spherically complete case. Let $C$ be a spherically complete, algebraically closed extension of $\Q_p$, and let $\O_C$ be the ring of integers. In this case we have
  \[ \TR_*(\O_C;\Z_p)=W(\O_C)[\beta],\]
  where $\beta$ is the Bott element \cite[Lemma 7.22]{MathewK1}. Note that $\beta$ maps to $\phi^{-n}(\mu)\sigma_n$ in $\TR^n_2(\O_C;\Z_p)$. The portion of $\TR_\trog(\O_C;\Z_p)$ of the form $\trog=*-\hyper\alpha$, with $*\in\Z$ and $\hyper\alpha$ an actual poly-representation (now allowing $\~\alpha=0$), is thus
  \[ \TR_\trog(\O_C;\Z_p) = W(\O_C)[\beta, a_{\hyper\lambda_{i}}, u_{\hyper\lambda_{i}}] \]
  subject to the above relations, and the additional relation
  \begin{align*}
   \beta a_{\hyper\lambda_{i}} &= \phi^{-i}(\mu) u_{\hyper\lambda_{i}}.
  \end{align*}
\end{remark}

% !TEX root=./slopes.tex

% Applications
\section{Applications}
\label{sec:applications}

We now answer some questions of Hesselholt. In \sec\ref{subsec:func} we identify the induced maps between $\K_*(k[x]/x^e;\Z_p)$ as $e$ varies; the results here are not new, but we give easier proofs. In \sec\ref{subsec:lock}, we (partially) explain the ``interlocking slopes'' phenomenon observed in \cite{LarsTowerGraphics}. In \sec\ref{subsec:mult}, we identify the multiplicative structure of $\K_*(k[x]/x^e;\F_p)$; as far as we know this result is new.

% Functoriality
\subsection{Functoriality}
\label{subsec:func}

Let $\iota_f\colon k[x]/x^e \to k[x]/x^{ef}$ send $x$ to $x^{f}$, and for $m>n$ let $\pi\colon k[x]/x^m\to k[x]/x^n$ be the natural projection. These induce maps on $K$-theory spectra
\begin{align*}
  \pi^* &\colon \K(k[x]/x^m) \to \K(k[x]/x^n)\\
  \iota_{f*} &\colon \K(k[x]/x^{ef}) \to \K(k[x]/x^e)\\
  \iota^*_f &\colon \K(k[x]/x^e) \to \K(k[x]/x^{ef})
\end{align*}
related in a similar manner to the restriction, Frobenius, and Verschiebung maps respectively. In \cite[\sec14]{HesselholtHandbook}, Hesselholt asks for the value of these maps under the identifications above. Armed with explicit names for the generators, we are able to answer this easily.

First, we have
\[
  \iota_f^*\left(\frac{x^d}{\Gamma{\lc d/e\rc}}\dlog x\right) = f\frac{x^{df}}{\Gamma{\lc df/ef\rc}}\dlog x
\]
so that there are commutative diagrams
\[\xymatrix{
  0 \ar[r] & \W_i(k) \ar[r]^-{V_e} \ar@{=}[d] & \W_{ei}(k) \ar@/^1em/[d]^-{V_f} \ar[r] & \K_{2i-1}(k[x]/x^e;\Z_p) \ar@/^1em/[d]^-{\iota_f^*} \ar[r] & 0\\
  0 \ar[r] & \W_i(k) \ar[r]^-{V_{ef}} & \W_{efi}(k) \ar@/^1em/[u]^-{F_f} \ar[r] & \K_{2i-1}(k[x]/x^{ef};\Z_p) \ar@/^1em/[u]^-{\iota_{f*}} \ar[r] & 0.
}\]
This recovers results of Horiuchi \cite{HoriuchiV}.

The case of $\pi^*$ is more complicated. First, some notation: let
\begin{align*}
  M &= \K_{2i-1}(k[x]/x^m;\Z_p)_j &
  N &= \K_{2i-1}(k[x]/x^n;\Z_p)_j\\
  s &= \lf\log_p\tfrac{mi}j\rf = \length_{W(k)}(M) - 1 &
  t &= \lf\log_p\tfrac{ni}j\rf = \length_{W(k)}(N) - 1
\end{align*}
We wish to identify
\[
  \ell := \length_{W(k)}\coker(M\xra{\pi^*} N).
\]

In \cite{LarsTower}, Hesselholt shows that
\begin{equation}
  \label{eq:lars-ans}
  \ell = \sum_{1\le h<i} \lf\log_p\tfrac{mh}j\rf - \lf\log_p\tfrac{nh}j\rf.
\end{equation}
We will explain how to see this from our perspective.

To compare the generators of $M$ and $N$, we must account for (a) the coefficient of $x^j\dlog x$ coming from the canonical map, and (b) the denominators $\Gamma{\lc j/m\rc}$ and $\Gamma{\lc j/n\rc}$. We see that $\pi^*$ sends a generator of $M$ to $p^{\ell'}$ times a generator of $N$, where
\begin{equation}
  \label{eq:our-ans}
  \ell' = \left(\sum_{r=0}^t i-\lc p^rj/n\rc\right) - \left(\sum_{r=0}^s i-\lc p^rj/m\rc\right) + v_p\frac{\Gamma{\lc j/n\rc}}{\Gamma{\lc j/m\rc}}.
\end{equation}

To relate this to \eqref{eq:lars-ans}, we note that
\[
  \lf\log_p\tfrac{nh}j\rf - \lf\log_p\tfrac{mh}j\rf = \#\{ r \mid nh < p^rj \le mh\}.
\]
In our graphical interpretation, \eqref{eq:lars-ans} is counting, for each integral slope $h<i$, the number of times one crosses the $j^\th$ $p$-band while traveling from $(n,nh)$ to $(m,mh)$ along the $h^\th$ slope ray (counting the ending point but not the starting point). On the other hand, \eqref{eq:our-ans} is counting, for each $j$, the number of slope rays one crosses while traveling vertically from $(n,p^rj)$ to $(n,ni)$ (counting the starting point but not the ending point), minus the same for traveling from $(m,p^rj)$ to $(m,mi)$. Since these are two ways of indexing the same sum, we get
\[ \ell' = \ell + v_p\frac{\Gamma{\lc j/n\rc}}{\Gamma{\lc j/m\rc}}. \]

This equation will be distressing to readers who believe in the consistency of mathematics. In fact, there is no contradiction: any time our answer exceeds Hesselholt's, the map $\pi^*$ is actually zero anyway.

\begin{lemma}
If $v_p\Gamma{\lc j/n\rc} > v_p\Gamma{\lc j/m\rc}$, then $\ell>t$ for every $i>0$.
\end{lemma}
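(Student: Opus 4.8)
The plan is to show that the hypothesis forces $\pi^*$ to kill the $j$-summand, which is exactly $\ell>t$ (the cokernel of a map into $N$ is a quotient of $N$, so $\ell\le\length_{W(k)}N=t+1$, with equality iff $\pi^*=0$). Since $\pi^*$ carries a generator of $M$ to $p^{\ell'}$ times a generator of $N$, we have $\ell=\min(\ell',t+1)$, so $\ell>t$ is equivalent to $\ell'\ge t+1$. Writing $\delta=v_p\frac{\Gamma\lc j/n\rc}{\Gamma\lc j/m\rc}\ge1$ and recalling (from the grid computation) that $\ell'=\delta+\ell_{\mathrm H}$, where $\ell_{\mathrm H}$ is the right-hand side of \eqref{eq:lars-ans}, it therefore suffices to prove $\ell_{\mathrm H}\ge t$. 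We may assume $t\ge0$, as otherwise $N=0$ and $\ell=0>t$.

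The key is to extract a useful witness from the hypothesis. By Legendre's formula and the identity $\lf\lf x/m\rf/n\rf=\lf x/(mn)\rf$ used earlier, $\delta=\sum_{a\ge1}\big(\lf\tfrac{j-1}{np^a}\rf-\lf\tfrac{j-1}{mp^a}\rf\big)$, a sum of nonnegative integers; as it is $\ge1$, some term is positive, yielding integers $a,c\ge1$ with $np^ac<j\le mp^ac$. Put $q=p^ac$: a positive multiple of $p$ with $nq<j\le mq$ (equivalently, $[j/m,j/n)$ contains a multiple of $p$). Also $\delta\ge1$ forces $\lf(j-1)/n\rf\ge p$, hence $j>pn$; together with $j\le ni$ (which holds since $t\ge0$) this gives $p<i$, so $i\ge p+1$, and $q<j/n\le i$ gives $q\le i-1$.

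Now I would run the combinatorial count. In the grid picture $\ell_{\mathrm H}$ counts the pairs $(h,r)$ with $1\le h\le i-1$, $r\in\Z$, $nh<p^rj\le mh$; scaling $nq<j\le mq$ by $p^k$ exhibits $(p^kq,k)$ as such a pair for every $k\ge0$ with $p^kq\le i-1$, and these pairs are distinct, so $\ell_{\mathrm H}\ge\#\{k\ge0:p^kq\le i-1\}=\lf\log_p\tfrac{i-1}{q}\rf+1$. Finally $nq<j$ gives $niq<ij$ and $i\ge p+1$ gives $ij\le pj(i-1)$, so $\tfrac{i-1}{q}>\tfrac{ni}{pj}$ and hence $\lf\log_p\tfrac{i-1}{q}\rf\ge\lf\log_p\tfrac{ni}{pj}\rf=\lf\log_p\tfrac{ni}{j}\rf-1=t-1$, giving $\ell_{\mathrm H}\ge t$. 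I expect the main obstacle to be this last step — namely recognizing that the geometric string $(p^kq,k)$ supplies enough lattice points and that their count beats $t$ (this is where $j>pn$ and $i\ge p+1$ enter). One should also dispose of the degenerate ranges where $N=0$ or where the denominators make $\length_{W(k)}M$ or $\length_{W(k)}N$ smaller than $s+1$ or $t+1$, but in those cases $N=0$ or $\pi^*$ is visibly zero. (Alternatively, one can finish immediately by invoking \eqref{eq:lars-ans}: it forces $\ell=\ell_{\mathrm H}$, while $\ell=\min(\ell_{\mathrm H}+\delta,t+1)$ by the above, so $\delta\ge1$ already gives $\ell=t+1>t$.)
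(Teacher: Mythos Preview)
Your proof is correct and follows essentially the same strategy as the paper's: use Legendre's formula to rewrite $v_p\Gamma\lc j/n\rc - v_p\Gamma\lc j/m\rc$ as a sum, extract from its positivity a witness integer $q=p^ac$ with $nq<j\le mq$, and then use powers of $p$ times this witness to force enough terms in the lattice-point count for $\ell$ to be positive.

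The only real difference is bookkeeping. The paper reindexes the double count by $r$ and observes that for each $r=0,\dots,t$ the integer $p^{u+r}k$ lies in $[p^rj/m,\,p^rj/n)$, so each of the $t+1$ terms $\lc p^rj/n\rc-\lc p^rj/m\rc$ is $\ge1$, giving $\ell\ge t+1$ in one stroke. You instead index by $h$, exhibit the specific pairs $(p^kq,k)$, and then have to bound $\lf\log_p\tfrac{i-1}{q}\rf$ below by $t-1$ via the auxiliary inequalities $j>pn$ and $i\ge p+1$; this yields $\ell_{\mathrm H}\ge t$, and you then add $\delta\ge1$ to reach $\ell'\ge t+1$. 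The paper's route avoids that last estimate and the $i\ge p+1$ detour. Your parenthetical shortcut at the end (comparing $\ell=\ell_{\mathrm H}$ with $\ell=\min(\ell_{\mathrm H}+\delta,\,t+1)$) is valid but relies on \eqref{eq:lars-ans} as a black box, which is exactly what the lemma is meant to reconcile with the paper's own formula; the paper therefore does not take that route.
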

\begin{proof}
  By Legendre's formula, we have
  \begin{align*}
    v_p\Gamma{\lc j/n\rc} &=
    v_p{\lf(j-1)/n\rf!}\\
    &= \sum_{r=1}^\infty \lf\frac{\lf(j-1)/n\rf}{p^r}\rf\\
    &= \sum_{r=1}^\infty \lf\frac{j-1}{np^r}\rf\\
    &= \sum_{r=1}^\infty (\lc j/np^r\rc-1)
  \end{align*}
  This gives
  \begin{equation}
    \label{eq:gamma-n-m}
    v_p\Gamma{\lc j/n\rc} - v_p\Gamma{\lc j/m\rc} =
    \sum_{r=1}^\infty \big(\lc j/np^r\rc - \lc j/mp^r\rc\big).
  \end{equation}

  For a fixed $r$, we have
  \[
    \#\{h \mid nh < p^rj \le mh\} = \lc p^rj/n\rc - \lc p^rj/m\rc
  \]
  and thus
  \begin{equation}
    \label{eq:l-sum}
    \ell \ge \sum_{r=0}^t \big(\lc p^rj/n\rc - \lc p^rj/m\rc\big).% + \sum_{r=t+1}^s (i-\lc p^rj/m\rc).
  \end{equation}

  Now if $v_p\Gamma{\lc j/n\rc} > v_p\Gamma{\lc j/m\rc}$, then from \eqref{eq:gamma-n-m} there is some $u$ and some integer $k$ such that
  \[ \frac j{mp^u} \le k < \frac j{np^u}. \]
  It follows that
  \[ \frac{p^r j}m \le p^{u+r}k < \frac{p^r j}n, \]
  so that every term in \eqref{eq:l-sum} is $\ge1$, showing that $\ell>t$.
\end{proof}

\begin{remark}
\label{rem:Fcrys}
Let $M_j^e$ denote the free $W(k)$-module spanned by $\left\{\dfrac{x^{p^rj}}{\Gamma{\lc p^rj/e\rc}}\dlog x \mid r\in\N\right\}$. This has a natural injective action of Frobenius, making it an ``$F$-crystal of infinite rank''. Then
\begin{align*}
  \length_{W(k)} M_j^e/\varphi^{-1}(p^i M_j^e) &= \sum_{r=0}^\infty \cs{i-\lc p^rj/e\rc}\\
  \intertext{and}
  \length_{k} M^e_j/(\varphi^{-1}(p^i M_j^e), p) &= \lf\log_p\tfrac{ei}j\rf+1.
\end{align*}
\end{remark}

% Interlocking slopes
\subsection{Interlocking slopes}
\label{subsec:lock}
We keep the notation of the previous subsection. One of the main results of \cite{LarsTower} is that the map
\[
  \pi^*\colon \K_{2i-1}(k[x]/x^m;\Z_p) \to \K_{2i-1}(k[x]/x^n;\Z_p)
\]
is zero for $i\gg0$. In the companion paper \cite{LarsTowerGraphics}, Hesselholt gives a graphical illustration of this result, plotting those $(j,i)$ for which the $j^\th$ component of this map is nonzero. He concludes with the remark, ``At present, the author does not understand the nature or the value of the apparent interlocking slopes that are seen in the figures below.'' We will try to shed some light on this.

We reproduce some of these graphs in Figures \ref{fig:tower-2-12-11} and \ref{fig:tower-3-12-11}. The solid red bars indicate that the map $\pi^*$ is nonzero for the given values of $i$ and $j$. The hatched green regions are where $v_p\Gamma{\lc j/n\rc}>v_p\Gamma{\lc j/m\rc}$, forcing $\pi^*$ to vanish per the discussion above. In blue, we have plotted lines of slope $p^r/m$ for varying $r$. To understand why these line up so well with the red bars, observe that \eqref{eq:lars-ans} increases whenever $i$ crosses $p^r j/m$. These are not exact bounds, however, so there is perhaps more to say here.

\begin{figure}
  \begin{center}
    \includegraphics[width=0.75\textwidth]{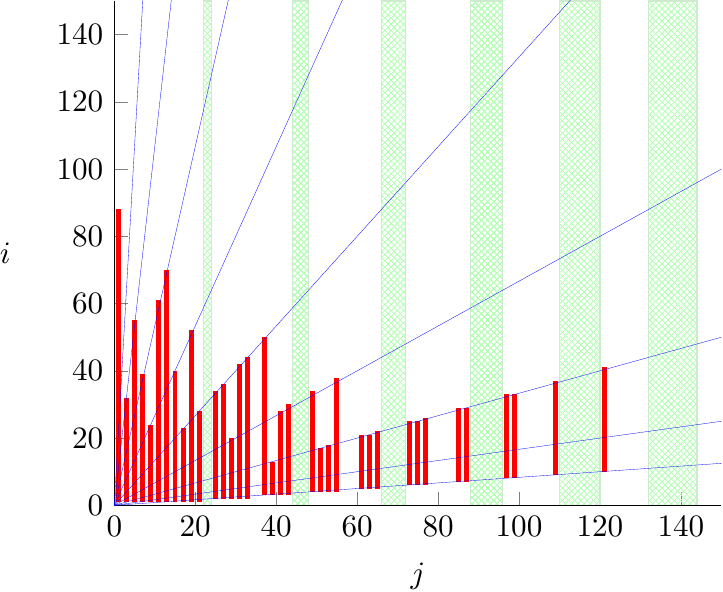}
  \end{center}
  \caption{$m=12$, $n=11$, and $p=2$}
  \label{fig:tower-2-12-11}
\end{figure}

\begin{figure}
  \begin{center}
    \includegraphics[width=0.75\textwidth]{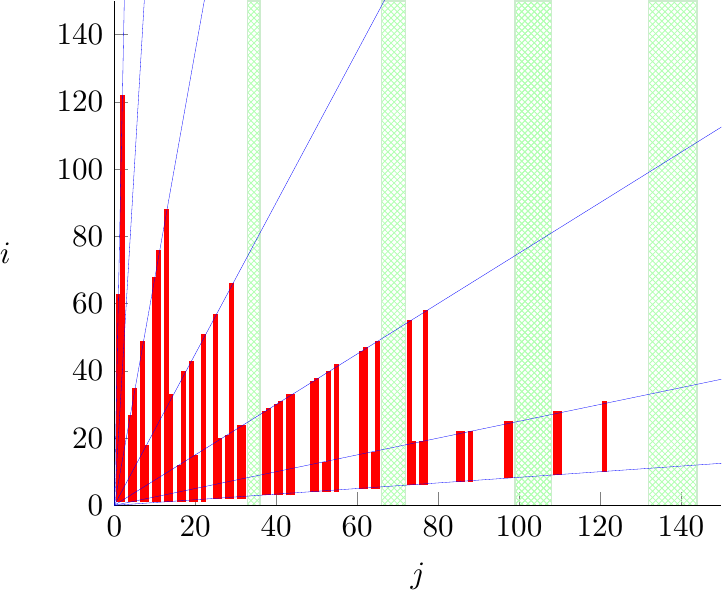}
  \end{center}
  \caption{$m=12$, $n=11$, and $p=3$}
  \label{fig:tower-3-12-11}
\end{figure}

\begin{remark}
Hesselholt's bars go all the way to the bottom of the graph, while ours do not. The ``missing bottoms'' are where the codomain of $\pi^*$ is zero, so this difference does not matter.
\end{remark}

% Multiplication
\subsection{Mod \tops{$p$}p multiplicative structure}
\label{subsec:mult}
Another question raised in \cite[\sec14]{HesselholtHandbook} is the multiplicative structure of $\K_*(k[x]/x^e;\F_p)$. To answer this, we use
\begin{align*}
  \K_{2i}(k[x]/x^e;\F_p) &= H^0\F_p(i)(k[x]/x^e)\\
  \K_{2i-1}(k[x]/x^e;\F_p) &= H^1\F_p(i)(k[x]/x^e),
\end{align*}
where
\[ \F_p(i)(R) \defeq \Z_p(i)(R)/p=\fib(\Nyg^{\ge i}LW\Omega_R/p \xra{\can-\varphi/p^i} LW\Omega_R/p). \]

The quotients by $p$ are of course derived, but since our models for $\Nyg^{\ge i}LW\Omega_R$ and $LW\Omega_R$ are $p$-torsionfree, we can implement this levelwise. Thus $LW\Omega_R/p$ and $\Nyg^{\ge i}LW\Omega_R/p$ are given by $\bigoplus_d$ of
\[
  k\<\frac{x^d}{\lf d/e\rf!}\>
  \xra\rmd
  k\<\frac{x^d}{\Gamma{\lc d/e\rc}}\dlog x\>
\]
and
\[
  k\<p^{\cs{i-\lf d/e\rf}}\frac{x^d}{\lf d/e\rf!}\>
  \xra\rmd
  k\<p^{\cs{i-\lc d/e\rc}}\frac{x^d}{\Gamma{\lc d/e\rc}}\dlog x\>.
\]
Using Lemma \ref{lem:leg-scale} we see that $\varphi/p^i$ takes
\[
  p^{\cs{i-\lf d/e\rf}}W(k)\<\frac{x^d}{\lf d/e\rf!}\>
  \quad\text{to}\quad
  p^{\cs{\lf d/e\rf -i}}W(k)\<\frac{x^{pd}}{\lf pd/e\rf!}\>,
\]
similar to the situation in degree 1.

It follows that $\~\K_*(k[x]/x^e;\F_p)$ is generated by classes $a_i^j$ and $b_i^j$ indexed by $i\ge1$ and $j\in I_p$, with $|a_i^j|=2i$ and $|b_i^j|=2i-1$. The superscripts $j$ should not be mistaken for exponents: rather, they stand for a sequence of exponents $x^{p^\bullet j}$. 
% \begin{align*}
%   a_i^j &= \sum_{r=s_a}^{t_a} p^{\cs{i-\lf p^rj/e\rf}}\frac{x^{p^rj}}{\lf p^rj/e\rf!}\\
%   b_i^j &= \sum_{r=s_b}^{t_b} p^{\cs{i-\lc p^rj/e\rc}}\frac{x^{p^rj}}{\Gamma{\lc p^rj/e\rc}}\dlog x
% \end{align*}
% We get
% \begin{align*}
%   a_{i_1}^{j_1} a_{i_2}^{j_2} &=
%     \sum_{r_1, r_2} 
% \end{align*}
The $b$'s are square-zero, so we need to determine the products
\begin{align*}
  a_{i_1}^{j_1} a_{i_2}^{j_2} &= {?}\\
  a_{i_1}^{j_1} b_{i_2}^{j_2} &= {?}
\end{align*}

\begin{example}
  In $\K_*(\F_2[x]/x^4;\F_2)$, we have classes
  \begin{align*}
    a_2^1 &= px^4 + \frac{x^8}{2!} + \frac{x^{16}}{4!} & b_2^3 &= px^3\dlog x + x^6\dlog x + \frac{x^{12}}{\Gamma(3)}\dlog x\\
    a_2^3 &= px^6 + \frac{x^{12}}{3!} & b_4^5 &= p\dfrac{x^{10}}{\Gamma(3)}\dlog x + \dfrac{x^{20}}{\Gamma(5)}\dlog x\\
    a_4^5 &= p^2\frac{x^{10}}{2!} + \frac{x^{20}}{5!} & b_4^7 &= p^2 x^7\dlog x + \frac{x^{14}}{\Gamma(4)}\dlog x + \frac{x^{28}}{\Gamma(7)}\dlog x\\
    a_4^7 &= p\frac{x^{14}}{3!} + \frac{x^{28}}{7!} & b_4^{11} &= p\frac{x^{11}}{\Gamma(3)}\dlog x + \frac{x^{22}}{\Gamma(6)}\dlog x
  \end{align*}
  In the calculations below, we will fade (some of the) terms that vanish in $H^*(\Nyg^{\ge 4}LW\Omega_{\F_2[x]/x^4}/p)$.
  
  An example of an $aa$ product is
  \begin{align*}
    a_2^1 a_2^3
      &= \sum\left(
        \begin{array}{ccc}
          p^2 x^{10} & p\dfrac{x^{14}}{2!} & \color{gray}p\dfrac{x^{22}}{4!}\\[1em]
          \color{gray} p\dfrac{x^{16}}{3!} & \dfrac{x^{20}}{2!3!} & \dfrac{x^{28}}{3!4!}
        \end{array}
      \right)\\
     &= pa_4^5 + a_4^7\\
     &= a_4^7.
  \end{align*}
  An example of an $ab$ product is
  \begin{align*}
    a_2^1b_2^3
      &= \sum\left(
        \begin{array}{ccc}
          p^{2}x^{7}\dlog x & p\dfrac{x^{11}}{2!}\dlog x & \color{gray} p\dfrac{x^{19}}{4!}\dlog x\\[1em]
          px^{10}\dlog x & \dfrac{x^{14}}{2!}\dlog x & \dfrac{x^{22}}{4!}\dlog x\\[1em]
          \color{gray} p\dfrac{x^{16}}{\Gamma(3)}\dlog x & \dfrac{x^{20}}{2!\Gamma(3)}\dlog x & \dfrac{x^{28}}{4!\Gamma(3)}\dlog x
        \end{array}
      \right)\\[1em]
      &= pb_4^5 + b_4^7 + b_4^{11}\\
      &= b_4^7 + b_4^{11}  .
    \end{align*}
    Beware that
    \begin{align*}
      a_2^1 a_2^1
        &= \sum\left(
          \begin{array}{ccc}
            p^2 x^8 & p\dfrac{x^{12}}{2!} & \color{gray} p\dfrac{x^{20}}{4!}\\[1em]
            p\dfrac{x^{12}}{2!} & \dfrac{x^{16}}{2!2!} & \dfrac{x^{24}}{2!4!}\\[1em]
            \color{gray} p\dfrac{x^{20}}{4!} & \dfrac{x^{24}}{4!2!} & \dfrac{x^{32}}{4!4!}
          \end{array}
        \right)\\
       &= pa_4^1 + 2a_4^3\\
       &= 0,
    \end{align*}
    i.e.\ a term could vanish due to appearing multiple times in the sum.
\end{example}

In general, $a_i^j$ (resp.\ $b_i^j$) is the sum of terms from the last degree (call it $p^sj$) where $\can$ is not an isomorphism, up to and including the first degree (call it $p^tj$) where $\varphi/p^i$ is not an isomorphism. Explicitly:

\begin{proposition}
  The bounds for the generators are
  \begin{align*}
    s_a(i,j) &= \lc\log_p\tfrac{ei}j\rc-1 & s_b(i,j) &= \lf\log_p\tfrac{e(i-1)}j\rf\\
    t_a(i,j) &= \lc\log_p\tfrac{e(i+1)}j\rc & t_b(i,j) &= \lf\log_p\tfrac{ei}j\rf+1
  \end{align*}
\end{proposition}
\begin{proof}
The expression for $t_b(i,j)$ is derived in the proof of Proposition \ref{prop:h1-zpi}. To adapt this to $t_a(i,j)$, the key step is to rewrite
\[
  \lf\frac{p^{t-1}j}e\rf \le i < \lf\frac{p^tj}e\rf
\]
as
\[
  \lf\frac{p^{t-1}j}e\rf < i+1 \le \lf\frac{p^tj}e\rf
\]
in order to apply \eqref{eq:floor-ext}. The others are similar.
\end{proof}

We have not been able to find a concise closed form for these products. However, it is straightforward to evaluate them by computer. We provide some charts in Figures \ref{fig:fp-mult-1}--\ref{fig:fp-mult-4}.

\begin{remark}
  Computation suggests that all $a_{i_1}^{j_1} a_{i_2}^{j_2}$ products vanish when $e=2$ and $p>2$. Proving this could be a good first step towards a more conceptual understanding of the ring structure.
\end{remark}

\newgeometry{margin=0.5in}
\begin{landscape}
  \thispagestyle{empty}

  \vspace*{\fill}
  \begin{figure}[h]
    \includegraphics[width=\linewidth]{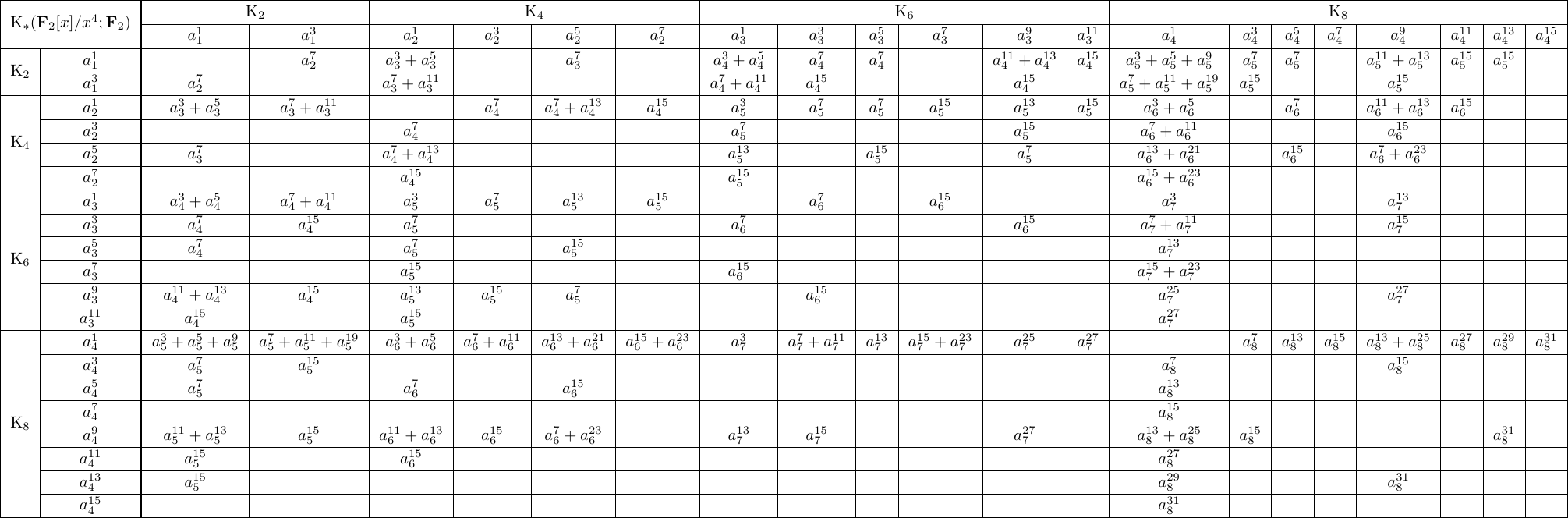}
    \caption{Even-even times table for $\K_*(\F_2[x]/x^4; \F_2)$}
    \label{fig:fp-mult-1}
  \end{figure}

  \vspace*{\fill}

  \begin{figure}[h]
    \includegraphics[width=\linewidth]{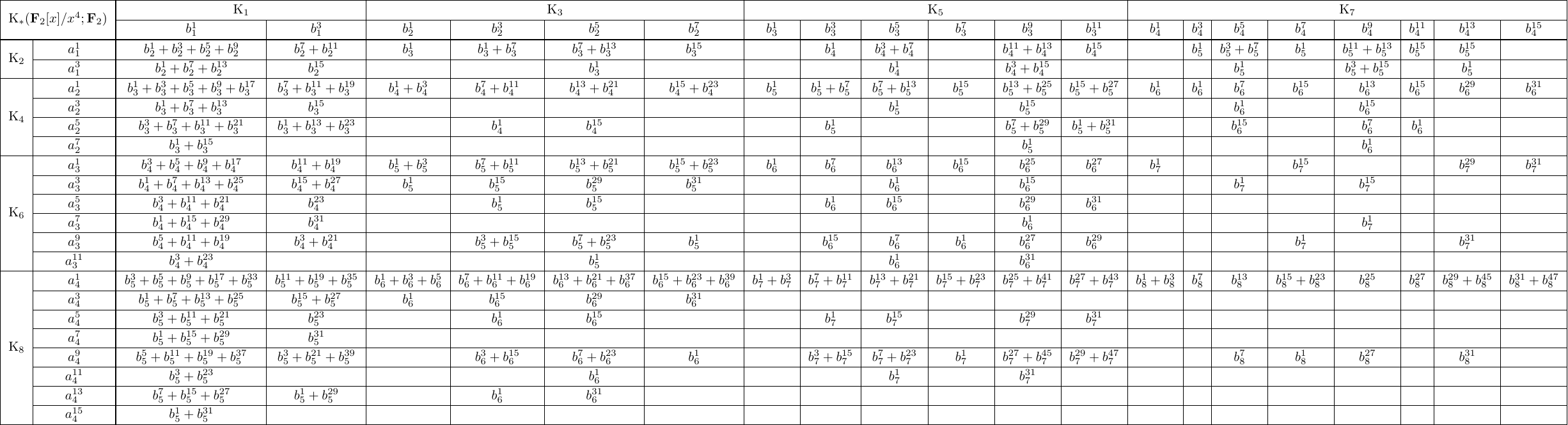}
    \caption{Even-odd times table for $\K_*(\F_2[x]/x^4; \F_2)$}
    \label{fig:fp-mult-2}
  \end{figure}
  \vspace*{\fill}
\end{landscape}
\restoregeometry

\newgeometry{tmargin=.5in,bmargin=.5in,lmargin=.25in,rmargin=.25in}
\begin{landscape}
  \vspace*{\fill}
  \thispagestyle{empty}
  \begin{figure}[h]
    \includegraphics[width=0.75\linewidth]{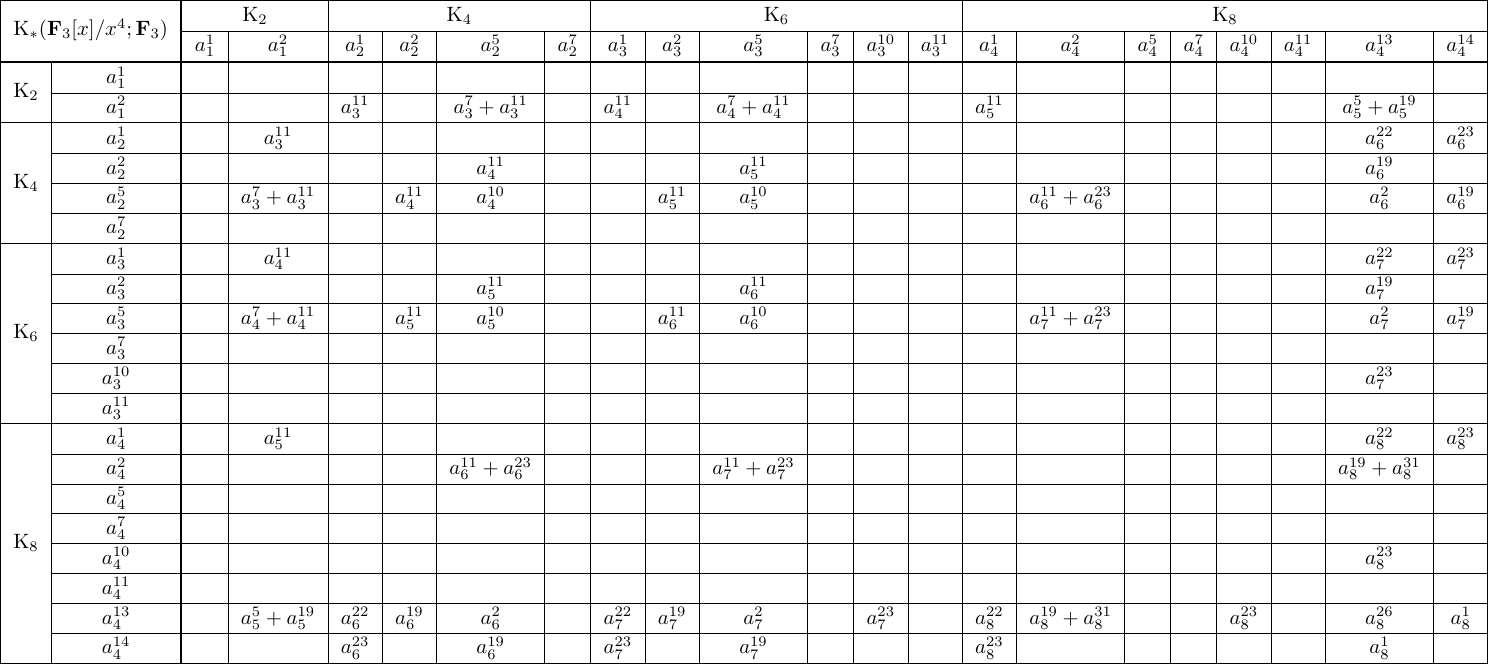}
    \caption{Even-even times table for $\K_*(\F_3[x]/x^4; \F_3)$}
    \label{fig:fp-mult-3}
  \end{figure}
  \vspace*{\fill}
  \begin{figure}[h]
    \includegraphics[width=\linewidth]{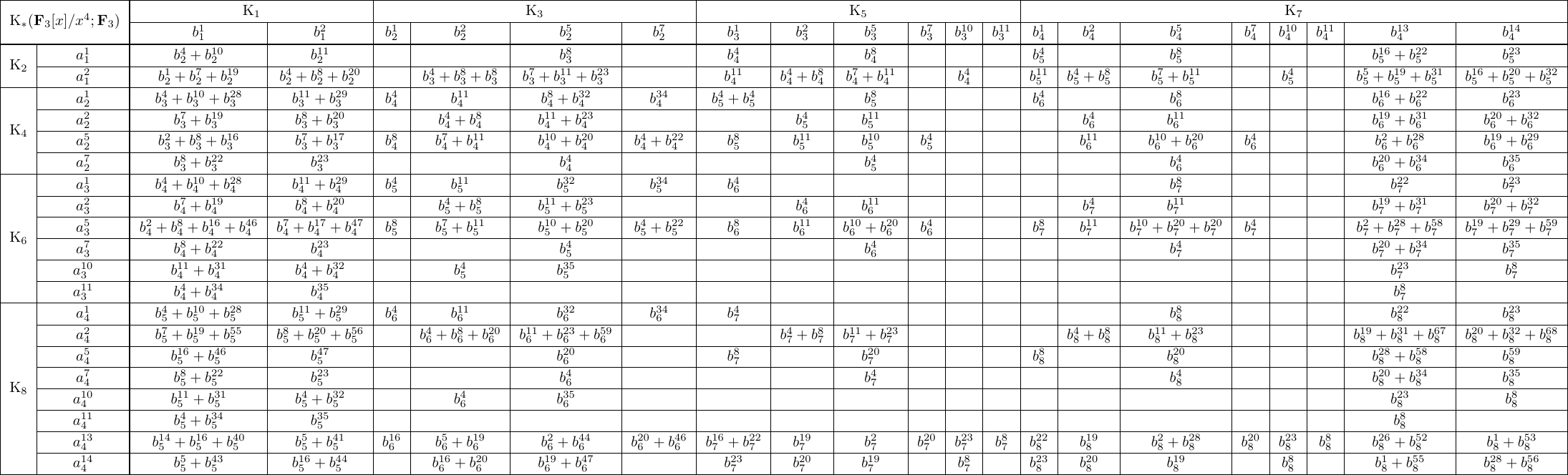}
    \caption{Even-odd times table for $\K_*(\F_3[x]/x^4; \F_3)$}
    \label{fig:fp-mult-4}
  \end{figure}
  \vspace*{\fill}
\end{landscape}
\restoregeometry
% !TEX root=./slopes.tex

% Epilogue
\section{Epilogue}
\label{sec:epilogue}

The $K$-theory of truncated polynomial algebras was originally calculated by Hesselholt-Madsen \cite{HMCyclicPolytopes} using equivariant homotopy theory (along with some difficult results on cyclic polytopes). More recently, Speirs \cite{SpeirsTrunc} simplified the calculation considerably using the Nikolaus-Scholze \cite{NikolausScholze} formula for $\TC$, bypassing equivariant homotopy. The approach above is even simpler, removing homotopy theory altogether. For the purposes of computing algebraic $K$-theory, this is very exciting. For the purposes of tricking arithmetic geometers into doing homotopy theory, it is an alarming trend.

At present, our interpretation in terms of slopes is mainly a visualization aid. It would be extremely interesting to relate this to e.g.\ slopes of $F$-crystals; Remark \ref{rem:Fcrys} is a gesture in this direction.

Our motivation for this computation was to try to understand the similarity between the formulas in \cite{SulSliceTHH} and those of \cite{HMCyclicPolytopes} and \cite{SpeirsTrunc}. We still do not have a satisfactory explanation for this.

The results above, together with those of \cite{SulSliceTHH}, establish a close connection between the following pairs of concepts which are ``off-by-one'' from each other:
\begin{center}
\begin{tabular}{ll}
  ($q$-)factorial & ($q$-)Gamma function\\
  floor & ceiling\\
  reduced regular representation & regular representation\\
  classical slice filtration & regular slice filtration
\end{tabular}
\end{center}

It would be very interesting to extend this table by more columns: take for example $q$-binomial coefficients, $q$-Bernoulli numbers, $q$-Catalan numbers; symmetric and alternating powers of representations; or the generalized slice filtrations of \cite{Wilson}. At the time of writing \cite{SulSliceTHH}, the author believed the classical slice filtration to be a historical misstep, and the regular slice filtration to be the ``correct'' version; writing this paper has led us to reconsider this prejudice. Another interesting question is whether more rows can be added.

% \bibliographystyle{amsalpha}
% \bibliography{../../bibliography}

\begin{thebibliography}{AMMN22}

\bibitem[AG11]{ROS1TR}
Vigleik Angeltveit and Teena Gerhardt, \emph{{$RO(S^1)$}-graded {TR}-groups of
  {$\mathbb F_p$}, {$\mathbb Z$} and $\ell$}, Journal of Pure and Applied
  Algebra \textbf{215} (2011), no.~6, 1405--1419.

\bibitem[AMMN22]{AMMN}
Benjamin Antieau, Akhil Mathew, Matthew Morrow, and Thomas Nikolaus, \emph{{On
  the Beilinson fiber square}}, Duke Mathematical Journal \textbf{171} (2022),
  no.~18, 3707 -- 3806.

\bibitem[BL22]{APC}
Bhargav Bhatt and Jacob Lurie, \emph{Absolute prismatic cohomology},
  \url{https://arxiv.org/abs/2201.06120}.

\bibitem[BMS18]{BMS1}
Bhargav Bhatt, Matthew Morrow, and Peter Scholze, \emph{{Integral $p$-adic
  Hodge theory}}, Publications math{\'e}matiques de l'IH{\'E}S \textbf{128}
  (2018), no.~1, 219--397.

\bibitem[BMS19]{BMS2}
\bysame, \emph{Topological {H}ochschild homology and integral $p$-adic {H}odge
  theory}, Publications math{\'e}matiques de l'IH{\'E}S \textbf{129} (2019),
  no.~1, 199--310.

\bibitem[BS22]{Prismatic}
Bhargav Bhatt and Peter Scholze, \emph{{Prisms and prismatic cohomology}},
  Annals of Mathematics \textbf{196} (2022), no.~3, 1135 -- 1275.

\bibitem[DGM12]{DGM}
Bjørn~Ian Dundas, Thomas~G. Goodwillie, and Randy McCarthy, \emph{{The local
  structure of algebraic $K$-theory}}, Springer London, 2012.

\bibitem[Hes05]{HesselholtHandbook}
Lars Hesselholt, \emph{{$K$-theory of truncated polynomial algebras}},
  pp.~71--110, Springer Berlin Heidelberg, Berlin, Heidelberg, 2005.

\bibitem[Hes07a]{LarsTower}
\bysame, \emph{{The tower of $K$-theory of truncated polynomial algebras}},
  Journal of Topology \textbf{1} (2007), no.~1, 87--114.

\bibitem[Hes07b]{LarsTowerGraphics}
\bysame, \emph{{The tower of $K$-theory of truncated polynomial algebras. A
  graphics illustration}},
  \url{https://math.mit.edu/~larsh/papers/022/towergraphics.pdf}.

\bibitem[Hes15]{HesselholtBigW}
Lars Hesselholt, \emph{{The big de Rham{\textendash}Witt complex}}, Acta
  Mathematica \textbf{214} (2015), no.~1, 135--207.

\bibitem[HM97a]{HMCyclicPolytopes}
Lars Hesselholt and Ib~Madsen, \emph{{Cyclic polytopes and the $K$-theory of
  truncated polynomial algebras}}, Inventiones mathematicae \textbf{130}
  (1997), no.~1, 73--97.

\bibitem[HM97b]{HMFinite}
\bysame, \emph{{On the $K$-theory of finite algebras over Witt vectors of
  perfect fields}}, Topology \textbf{36} (1997), no.~1, 29 -- 101.

\bibitem[Hor21]{HoriuchiV}
Ryo Horiuchi, \emph{{Verschiebung maps among $K$-groups of truncated polynomial
  algebras}}, Journal of Pure and Applied Algebra \textbf{225} (2021), no.~8,
  106641.

\bibitem[KMN]{KMNPolygonic}
Achim Krause, Jonas McCandless, and Thomas Nikolaus, \emph{{Polygonic spectra
  and $\mathrm{TR}$ with coefficients}}, In preparation.

\bibitem[Mat21]{MathewK1}
Akhil Mathew, \emph{{On $K(1)$-local $\mathrm {TR}$}}, Compositio Mathematica
  \textbf{157} (2021), no.~5, 1079–1119.

\bibitem[Mat22]{MathewBMS}
\bysame, \emph{{Some recent advances in topological Hochschild homology}},
  Bulletin of the London Mathematical Society \textbf{54} (2022), no.~1, 1--44.

\bibitem[NS18]{NikolausScholze}
Thomas {Nikolaus} and Peter {Scholze}, \emph{{On topological cyclic homology}},
  {Acta Math.} \textbf{221} (2018), no.~2, 203--409.

\bibitem[Rig22]{RiggenbachTruncated}
Noah Riggenbach, \emph{{$K$-Theory of Truncated Polynomials}},
  \url{https://arxiv.org/abs/2211.11110}.

\bibitem[Sch17]{ScholzeQ}
Peter Scholze, \emph{Canonical $q$-deformations in arithmetic geometry},
  {Annales de la Facult\'e des sciences de Toulouse : Math\'ematiques}
  \textbf{Ser. 6, 26} (2017), no.~5, 1163--1192.

\bibitem[Spe20]{SpeirsTrunc}
Martin Speirs, \emph{{On the $K$-theory of truncated polynomial algebras,
  revisited}}, Advances in Mathematics \textbf{366} (2020), 107083.

\bibitem[Sul20]{SulSliceTHH}
Yuri J.~F. Sulyma, \emph{{A slice refinement of B\"okstedt periodicity}},
  \url{https://arxiv.org/abs/2007.13817}.

\bibitem[Wil17]{Wilson}
Dylan Wilson, \emph{On categories of slices},
  \url{https://arxiv.org/abs/1711.03472}.

\end{thebibliography}
\providecommand{\bysame}{\leavevmode\hbox to3em{\hrulefill}\thinspace}
\providecommand{\MR}{\relax\ifhmode\unskip\space\fi MR }
% \MRhref is called by the amsart/book/proc definition of \MR.
\providecommand{\MRhref}[2]{%
  \href{http://www.ams.org/mathscinet-getitem?mr=#1}{#2}
}
\providecommand{\href}[2]{#2}

\end{document}